\newcommand{\PP}{\mathbb{P}}
\newcommand{\abs}[1]{\lvert#1\rvert}
\renewcommand{\Re}{{\rm Re}}
\renewcommand{\Im}{{\rm Im}}
\newcommand{\0}{{\mathbf 0}}
\newcommand{\dist}{{\textup{dist}}}
\newcommand{\C}{{\mathbb C}}
\renewcommand{\eqref}[1]{(\ref{#1})}
\newcommand{\E}{{\mathbb E}}
\newcommand{\NN}{{\mathcal N}}
\newcommand{\vx}{{\mathbf x}}
\newcommand{\vz}{{\mathbf z}}
\newcommand{\va}{{\mathbf a}}
\newcommand{\vb}{{\mathbf b}}
\newcommand{\vv}{{\mathbf v}}
\newcommand{\vu}{{\mathbf u}}
\newcommand{\vh}{{\mathbf h}}
\newcommand{\vepsilon}{{\bm \epsilon}}
\newcommand{\s}{{\mathbf s}}
\newtheorem{theorem}{Theorem}[section]
\newtheorem{lemma}{Lemma}[section]
\newtheorem{remark}{Remark}[section]
\newcommand{\zz}{^{\top}}
\date{}
\begin{document}
	\title{Convergence analysis of Wirtinger Flow for Poisson phase retrieval}	
	\author{Bing Gao}
	\address{Department of Mathematics, Nankai University, Tianjin, 300071, China}
	\email{gaobing@nankai.edu.cn}
	\author{Ran Gu}
	\address{NITFID, School of Statistics and Data Science, Nankai University, Tianjin 300071, China}
    \email{rgu@nankai.edu.cn}
    \thanks{The second author is the corresponding author.}
    \author{Shigui Ma}
    \address{College of Tourism and Service Management, Nankai University, Tianjin 300071, China}
    \email{sma@nankai.edu.cn}
    \subjclass[2020]{94A12, 49M37.}
    \keywords{Poisson Phase Retrieval, Wirtinger FLow, Incremental Wirtinger FLow, Linear Convergence, Poisson Noise}
    \thanks{Bing Gao was supported by NSFC grant \#12001297. Ran Gu was supported in part by National Key R\&D Program of China grant \#2022YFA1003800, NSFC grant \#12201318 and the Fundamental Research Funds for the Central Universities \#63223078. Shigui Ma was supported by NSFC grant \#72301147.}
\maketitle
\begin{abstract}
	This paper presents a rigorous theoretical convergence analysis of the Wirtinger Flow (WF) algorithm for Poisson phase retrieval, a fundamental problem in imaging applications. Unlike prior analyses that rely on truncation or additional adjustments to handle outliers, our framework avoids eliminating measurements or introducing extra computational steps, thereby reducing overall complexity. We prove that WF achieves \textbf{linear convergence} to the true signal under noiseless conditions and remains \textbf{robust and stable} in the presence of bounded noise for Poisson phase retrieval. Additionally, we propose an incremental variant of WF, which significantly improves computational efficiency and guarantees convergence to the true signal with high probability under suitable conditions.
\end{abstract}
\section{Introduction}
\subsection{Problem setup and related work}
Phase retrieval is a fundamental computational problem with diverse applications, including optics\cite{walther1963question}\cite{millane1990phase}, X-ray crystallography\cite{miao1999extending}\cite{miao2008extending}, and astronomical imaging\cite{fienup1987phase}. It involves recovering a signal from intensity-only measurements, which arise due to inherent physical limitations. Formally, the goal is to recover a signal $\vx$ from intensity-only measurements $\{y_j = |\va_j^*\vx|^2,\,j=1,\ldots,m\}$, where the phase information is inherently lost. Here, $\va_j$ denotes a sampling vector, which could be Fourier transforms, coded diffraction patterns, short-time Fourier transforms or random Gaussian transforms.

Classical algorithms for phase retrieval are based on alternating projection methods, such as the Gerchberg-Saxton algorithm\cite{gerchberg1972practical}, the Fienup algorithm\cite{fienup1982phase}, and alternating minimization\cite{netrapalli2013phase}. While simple to implement and parameter-free, these methods are often hindered by the non-convexity of the problem, which can lead to convergence to local minima. To address these limitations, convex relaxation approaches like PhaseLift\cite{candes2013phaselift} and PhaseCut\cite{waldspurger2015phase} reformulate the phase retrieval problem by “lifting” the signal $ \vx $ to a rank-one matrix $ \vx\vx^* $ and similarly lifting $ \va_j $ as $ \va_j\va_j^* $. Then the observations are actually linear measurements of the matrix $ \vx\vx^*\in\C^{n\times n}$.  When the number of measurements $ m $ is sufficiently large, these convex algorithms are robust to noise. However, the lifting operation significantly increases computational complexity, limiting their efficiency for large-scale problems.
 
In recent years, numerous non-convex algorithms have been developed to bypass the complexity of lifting while efficiently achieving global optima. These algorithms typically involve two stages: initialization and refinement. In the 
refinement stage, the objective function to be minimized is often chosen as:
\begin{align}\label{gaussmodel}
	\min_{\vz} \frac{1}{m} \sum_{j=1}^{m}\left(|\va_j^*\vz|^2-y_j\right)^2 \hspace{8pt}\textup{or}\hspace{8pt}\min_{\vz} \frac{1}{m} \sum_{j=1}^{m}\left(|\va_j^*\vz|-\sqrt{y_j}\right)^2.
\end{align}
Representative algorithms include Wirtinger Flow (WF) \cite{candes_wf}, Gauss-Newton method\cite{gao2017phaseless}, Reshaped Wirtinger Flow\cite{zhang2016reshaped}, and Truncated Amplitude Flow\cite{wang2018solving}, etc.
To improve computational efficiency, several incremental variants,  such as Incremental Truncated Amplitude Flow\cite{zhang2017phase} and Incremental Reshaped Wirtinger Flow\cite{Zhang2017A}, have been proposed and demonstrated competitive performance in simulations. To simplify the solving process, some revised models are also introduced such as those given in papers \cite{gao2020perturbed} and \cite{luo2020phase}.

In practical scenarios, measurements are often corrupted by Poisson noise, i.e.,
\begin{equation}\label{Poisson_samp}
	y_j \,{\sim} \,\text{Poisson}(|\langle \va_j, \vx\rangle|^2), \quad j=1,2,\ldots, m.
\end{equation}
This is particularly relevant in optical applications, where the intensity correlates with photon counts, and Poisson distributions effectively describe the photon statistics at each pixel. For example, in Fourier ptychographic microscopy (FPM), data sets consist of bright-field and dark-field images with vastly different intensity levels. Bright-field images have higher intensities, leading to higher noise levels compared to dark-field images. In such contexts, the Poisson noise model often outperforms the Gaussian noise model, which neglects the intensity-dependent nature of noise \cite{Yeh2015experimental}.

From the maximum likelihood estimation perspective, the measurement data in (\ref{Poisson_samp}) lead to the following optimization problem:
\begin{align}\label{poissonmodel}
	\min_{\vz} \frac{1}{m} \sum_{j=1}^{m}\left(|\va_j^*\vz|^2-y_j\log(|\va_j^*\vz|^2)\right).
\end{align}
By contrast, assuming the measurements are corrupted by Gaussian noise, the maximum likelihood estimation results in models like (\ref{gaussmodel}). To solve model (\ref{poissonmodel}), the Truncated Wirtinger Flow (TWF) algorithm was proposed in \cite{chen2015solving}. TWF starts with a well-chosen initial point and iteratively updates the estimate using gradient descent, with a truncation procedure to exclude observations that cause large deviations from the mean of $ |\va_j^*\vz|/\|\vz\| $ or $ |\sqrt{y_j}-|\va_j^*\vz|| $. This truncation step plays a critical role in ensuring linear convergence, particularly in the real domain. For incremental adaptations of this algorithm, where only one measurement is processed per iteration, the truncation criteria require modifications to avoid scanning the entire dataset, as discussed in \cite{kolte2016phase}.
%
%
%
\subsection{Goals and Motivations}

The Poisson noise model has gained increasing attention due to its strong relevance to practical applications, particularly in imaging systems where photon statistics play a critical role. Recently, a more generalized observation model has been considered:
\begin{equation}\label{sample2}
	y_j \,{\sim} \,\text{Poisson}\big(|\langle \va_j, \vx\rangle|^2+b_j\big), \quad j=1,2,\ldots, m,
\end{equation}
where $b_j$ represents a known mean background for the $j$-th measurement. This setting, referred to as the Poisson phase retrieval problem, better captures real-world scenarios and has been extensively discussed in \cite{li2021poisson} and \cite{fatima2022pdmm}. 

For measurements following (\ref{sample2}), the maximum likelihood estimation problem takes the form:
\begin{align*}
	\min_{\vz}\frac{1}{m} \sum_{j=1}^{m}\left(|\va_j^*\vz|^2+b_j-y_j\log(|\va_j^*\vz|^2+b_j)\right),
\end{align*}
which extends the simpler model (\ref{poissonmodel}) with an additional background term $b_j$. Based on this model, several algorithms, including WF, ADMM, and MM methods, have been developed \cite{li2021poisson, li2021algorithms}. Notably, \cite{fatima2022pdmm} proposed a primal-dual majorization-minimization (PDMM) algorithm and established its convergence to a stationary point. However, despite these advances, the theoretical guarantees for these algorithms remain limited.

The main goal of this paper is to address this gap by providing a rigorous theoretical analysis for the Wirtinger Flow (WF) algorithm, one of the simplest and most computationally efficient methods for Poisson phase retrieval. Specifically, our contributions are:
\begin{itemize}
	\item We prove that WF for Poisson phase retrieval achieves \textbf{linear convergence to the true signal} with optimal sample complexity under noiseless measurements. This significantly strengthens its theoretical foundation.
	\item We establish the \textbf{stability} of WF for Poisson phase retrieval under bounded noise, demonstrating that the algorithm remains robust even in the presence of random perturbations in the observations.
	\item We introduce an \textbf{incremental version} of WF, designed to enhance computational efficiency by processing one measurement at a time. We rigorously prove that this incremental algorithm converges to the true signal with high probability in the noiseless case.
\end{itemize}

These contributions not only fill the theoretical gaps in existing literature but also highlight the potential of WF and its incremental variant to enable scalable and robust phase retrieval under realistic Poisson noise models.
\subsection{Notations}
Throughout this paper, let $ \vx\in\C^n $ represent the signal we aim to recover, and $ \vz_k $ denote the $ k $-th iteration point of the recovery algorithm. We assume the measurements $ \va_j\sim \mathcal{N}(\0,I_n/2)+i\mathcal{N}(\0,I_n/2), \,\,j=1,2,\ldots, m $ are complex Gaussian random vectors. For a complex number $ x $, $ \Re(x) $ represents its real part. For a set $ I $, $ |I|$ denotes its cardinality. Let $ \|\cdot\| $ denote the Euclidean norm. We use $ C $, $C'$, $c$, $ c' $ or their subscripted forms to denote positive constants, whose values may change from line to line. 
Since $ |\langle\va,\vx \rangle|= |\langle\va,c\vx \rangle|$ for any $ |c|=1 $, the phase retrieval problem cannot distinguish two signals that differ by a unit factor. Therefore, we define the distance between two vectors as:
\[
\text{dist}(\vx,\vz)=\min_{\phi\in[0,2\pi)}\|\vx\cdot e^{i\phi}-\vz\|:=\|\vx\cdot e^{i\phi(\vz)}-\vz\|.
\]
We also define $ \mathcal{S_{\vx}(\rho)} $ as the $ \rho $-neighborhood of $ \vx $:
\[
\mathcal{S_{\vx}(\rho)}:=\{\vz\in\C^n: \text{dist}(\vx,\vz)\leq \rho\|\vx\| \}.
\]
\subsection{Organizations}

The structure of this paper is as follows:  In Section \ref{section2}, we introduce the Wirtinger Flow (WF) algorithm for Poisson phase retrieval and rigorously analyze its convergence properties. Specifically, we establish that WF achieves linear convergence to the exact solution under noiseless measurements and demonstrate robust stability under bounded noise.  
Section \ref{section3} extends WF to an incremental variant, which processes one measurement per iteration to significantly reduce computational cost. We provide a detailed theoretical analysis, proving that the incremental algorithm converges to the true signal with high probability under appropriate conditions.  In Section \ref{section4}, we conduct a series of numerical experiments to validate our theoretical results. These experiments assess the convergence rates under different step sizes, the influence of the background term 
$\vb$, and the comparisons of WF algorithm under different noise settings.
Finally, Section \ref{conclude} summarizes the key contributions of this work and outlines potential directions for future research.  

The appendices provide additional technical details to support the theoretical analysis. Appendix \ref{sec_appA} contains the proofs of two key lemmas regarding the smoothness and curvature conditions, which are essential for ensuring the convergence of the proposed algorithms. Appendix \ref{sec_appB} presents supplementary lemmas and examines the relationships between the parameters used in this paper.  
\section{Wirtinger Flow for Poisson phase retrieval}\label{section2}

We begin by restating the noisy dataset, the corresponding solution model, and the Wirtinger Flow method for Poisson phase retrieval (WF-Poisson). Following the setup in \cite{li2021poisson} and \cite{fatima2022pdmm}, we assume the measurements follow a Poisson distribution:
\begin{equation*}
	y_j \sim \text{Poisson}\big(|\langle \va_j, \vx\rangle|^2 + b_j\big), \quad j = 1, 2, \ldots, m,
\end{equation*}
where $b_j > 0$ is a known mean background term for the $j$-th measurement. For the theoretical analysis, we further assume that $b_j$ is proportional to the true signal intensity, satisfying:
\[
\alpha_1 |\langle \va_j, \vx\rangle|^2 \leq b_j \leq \alpha_2 |\langle \va_j, \vx\rangle|^2,
\]
where $0 < \alpha_1 \leq \alpha_2$ are constants. To ensure the measurements provide sufficient information, we impose the following condition:
\[
|\langle \va_j, \vx\rangle| \geq C_\vx \|\vx\|, \quad j = 1, 2, \ldots, m,
\]
where $C_\vx > 0$ is a constant depending on the signal $\vx$.

The Poisson phase retrieval problem is formulated as an unconstrained minimization problem based on the maximum likelihood estimation (MLE):
\begin{align}\label{ourmodel}
	\min_{\vz} f(\vz) := \frac{1}{m} \sum_{j=1}^{m}\left(|\va_j^* \vz|^2 + b_j - y_j \log(|\va_j^* \vz|^2 + b_j)\right).
\end{align}

\subsection{Wirtinger Flow method for Poisson phase retrieval}

To solve the optimization problem in (\ref{ourmodel}), we apply the Wirtinger Flow method, a gradient descent approach that iteratively updates solution estimates using the Wirtinger derivative of the objective function $ f(\vz)$.  The Wirtinger derivative, which treats $ \vz $ and $ \overline{\vz} $ as independent variables, is commonly used for real-valued functions of complex variables. Given an initial estimate $ \vz_0 $, the iteration updates are defined as:
\begin{equation}\label{iteration}
	\vz_{k+1} = \vz_k-\mu \nabla f(\vz_k),
\end{equation}
where $ \mu $ is the step-size and 
\[
\nabla f(\vz) =\left(\frac{\partial f(\vz,\overline{\vz})}{\partial \vz}\Big|_{\overline{\vz} = \text{constant}}\right)^*= \frac{1}{m}\sum_{j=1}^{m}\left(1-\frac{y_j}{|\va_j^*\vz|^2+b_j}\right)\va_j\va_j^*\vz
\]
is the Wirtinger derivative. 
\begin{remark}
	Obtaining a well-chosen initial estimate is critical for the convergence of WF. While this paper assumes the existence of an initial guess $\vz_0$ satisfying 
	\[
	\dist (\vz_0, \vx) \leq \rho\|\vx\|,
	\]
	for some $\rho > 0$, the specific initialization strategies are beyond the scope of this work. For further details, readers may refer to \cite{candes_wf, wang2018solving, gao2017phaseless, luo2019optimal}.
\end{remark}

\subsection{Convergence result}
We now focus on the convergence analysis of the Wirtinger Flow method for Poisson phase retrieval, starting with the noiseless scenario. Here, we establish that, in the absence of noise, Wirtinger Flow achieves exact recovery with $ m=O(n) $ measurements.
\begin{theorem}[Exact recovery]\label{mainresult1}
	Suppose $ y_j=|\va_j^*\vx|^2+b_j ,\,j=1,2,\ldots, m$ with $\alpha_1|\langle \va_j, \vx\rangle|^2 \leq b_j\leq \alpha_2|\langle \va_j, \vx\rangle|^2 $. Here $ \alpha_1 $ and $ \alpha_2 $ are positive constants. Starting from an initial point $ \vz_0\in\mathcal{S}_\vx(\rho) $, we solve (\ref{ourmodel}) iteratively by
	\[
	\vz_{k+1} = \vz_k-\mu \nabla f(\vz_k),
	\]
	with step size $ \mu<2l_{cur}/u^2_{smo}  $. Here $l_{cur} $ and $ u_{smo} $ are positive constants defined in Lemmas \ref{smoothness} and \ref{curvature condition}, respectively. 
	Then for a sufficiently large constant $ C $, 
	when the measurements $ m\geq Cn $, with probability at least $ 1-(k+1)\exp(-cn) $, we have 
	\[
	\textup{dist}^2(\vz_{k+1}, \vx)\leq (1-t)^{k+1}\cdot\textup{dist}^2(\vz_0,\vx)
	\]
	with $ t=\mu(2\cdot l_{cur}-\mu \cdot u^2_{smo})>0 $.
\end{theorem}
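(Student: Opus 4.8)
The plan is to derive a one-step geometric contraction $\dist^2(\vz_{k+1},\vx)\le(1-t)\,\dist^2(\vz_k,\vx)$ and then unfold it by induction. The first step is a standard phase-alignment device: writing $\phi_k:=\phi(\vz_k)$ for the optimal phase that aligns $\vz_k$ with $\vx$, the fact that $\dist$ is a minimum over all phases gives
\begin{equation*}
\dist^2(\vz_{k+1},\vx)\;\le\;\|\vz_{k+1}-\vx e^{i\phi_k}\|^2,
\end{equation*}
so it is enough to control the right-hand side for this \emph{one} fixed phase. This lets us sidestep tracking how the optimal phase drifts from iterate to iterate, which is the usual source of friction in complex phase retrieval.

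Next I would set $\vh:=\vz_k-\vx e^{i\phi_k}$, so that $\|\vh\|=\dist(\vz_k,\vx)$ by the choice of $\phi_k$, and expand the update $\vz_{k+1}=\vz_k-\mu\nabla f(\vz_k)$:
\begin{align*}
\|\vz_{k+1}-\vx e^{i\phi_k}\|^2
&=\|\vh-\mu\nabla f(\vz_k)\|^2\\
&=\|\vh\|^2-2\mu\,\Re\inner{\nabla f(\vz_k),\vh}+\mu^2\|\nabla f(\vz_k)\|^2.
\end{align*}
The curvature condition (Lemma \ref{curvature condition}) supplies the lower bound $\Re\inner{\nabla f(\vz_k),\vh}\ge l_{cur}\|\vh\|^2$, and the smoothness condition (Lemma \ref{smoothness}) supplies $\|\nabla f(\vz_k)\|^2\le u_{smo}^2\|\vh\|^2$. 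Substituting both yields
\begin{equation*}
\dist^2(\vz_{k+1},\vx)\le\big(1-2\mu l_{cur}+\mu^2 u_{smo}^2\big)\|\vh\|^2=(1-t)\,\dist^2(\vz_k,\vx),
\end{equation*}
with $t=\mu(2l_{cur}-\mu u_{smo}^2)$. The step-size restriction $\mu<2l_{cur}/u_{smo}^2$ is precisely what forces $t>0$, and since the left-hand side is nonnegative the factor $1-t$ is automatically in $[0,1)$, giving a genuine contraction.

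With the one-step bound in hand, the conclusion follows by induction. Because $1-t<1$ the distance is strictly decreasing, so $\vz_0\in\mathcal{S}_\vx(\rho)$ guarantees $\vz_k\in\mathcal{S}_\vx(\rho)$ for every $k$, which is what keeps Lemmas \ref{smoothness} and \ref{curvature condition} applicable at each iterate; chaining the bound from $\vz_0$ up to $\vz_{k+1}$ gives $\dist^2(\vz_{k+1},\vx)\le(1-t)^{k+1}\dist^2(\vz_0,\vx)$. For the probability bound, each invocation of the two lemmas holds on an event of probability at least $1-\exp(-cn)$ once $m\ge Cn$; applying them at the $k+1$ points $\vz_0,\dots,\vz_k$ and taking a union bound over these events yields the stated $1-(k+1)\exp(-cn)$.

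The main obstacle is not the algebra but making the probabilistic step rigorous: each iterate $\vz_k$ is itself a measurable function of the random $\{\va_j\}$, so the curvature and smoothness estimates cannot be applied as if $\vz_k$ were a fixed, data-independent point. The clean resolution is to establish Lemmas \ref{smoothness} and \ref{curvature condition} as \emph{uniform} statements over the whole neighborhood $\mathcal{S}_\vx(\rho)$ (an $\epsilon$-net argument combined with concentration for the relevant quadratic forms in the $\va_j$), so that on a single high-probability event the bounds hold simultaneously at all points of $\mathcal{S}_\vx(\rho)$ and hence at every iterate, regardless of its dependence on the data. One must also verify that the structural hypotheses $|\va_j^*\vx|\ge C_\vx\|\vx\|$ and $\alpha_1|\va_j^*\vx|^2\le b_j\le\alpha_2|\va_j^*\vx|^2$ are exactly what tame the Poisson gradient: the background term $b_j$ keeps the factor $1-y_j/(|\va_j^*\vz|^2+b_j)$ bounded and neutralizes the singularity of the logarithm, which is what makes the curvature and smoothness constants $l_{cur},u_{smo}$ finite and positive in the first place.
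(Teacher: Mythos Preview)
Your proposal is correct and follows essentially the same approach as the paper: the phase-alignment inequality $\dist^2(\vz_{k+1},\vx)\le\|\vz_{k+1}-\vx e^{i\phi(\vz_k)}\|^2$, the square expansion, substitution of the smoothness and curvature bounds, and induction. Your added remark about needing the two lemmas \emph{uniformly} over $\mathcal{S}_\vx(\rho)$ (so that data-dependence of the iterates is not an issue) is exactly how the paper proves those lemmas in the appendix, via a covering argument.
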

This theorem guarantees the linear convergence of the algorithm to an exact solution. As with most non-convex algorithms, this result relies on both a local smoothness condition (Lemma \ref{smoothness}) and a local curvature condition (Lemma \ref{curvature condition}).

\begin{proof}
	Under the conditions given, the smoothness condition (Lemma \ref{smoothness}) provides an upper bound on $ \|\nabla f(\vz_k)\| $:
	\[
	\|\nabla f(\vz_k)\|\leq u_{smo}\cdot\dist(\vx,\vz_k)
	\]
	and the curvature condition (Lemma \ref{curvature condition}) provides a lower bound on $ \Re\big( \langle\nabla f(\vz_k), \vz_k-\vx e^{i\phi(\vz_k)}\rangle\big) $:
	\[
	\Re\big( \langle\nabla f(\vz_k), \vz_k-\vx e^{i\phi(\vz_k)}\rangle\big)\geq l_{cur}\cdot\dist^2(\vx,\vz_k).
	\]
	Here $ u_{smo} $ and $ l_{cur} $ are positive constants depending on $ \alpha_1 $, $ \alpha_2 $ and $ \rho $. Therefore, when $ m\geq Cn $ and $  \vz_{k+1}\in \mathcal{S}_\vx(\rho)$, with probability at least $ 1-\exp(-cn) $, we have 
	\begin{equation}\label{iter_down}
		\begin{aligned}
			\dist^2(\vz_{k+1},\vx)&=\|\vz_{k+1}-e^{i\phi(\vz_{k+1})}\vx\|^2\leq \|\vz_{k+1}-e^{i\phi(\vz_{k})}\vx\|^2=\|\vz_k-e^{i\phi(\vz_{k})}\vx-\mu\nabla f(\vz_k)\|^2\\
			&\leq \dist^2(\vz_{k},\vx)+\mu^2\|\nabla f(\vz_k)\|^2-2\mu\Re\big( \langle\nabla f(\vz_k), \vz_k-\vx e^{i\phi(\vz_k)}\rangle\big)\\
			&\leq ( 1+\mu^2\cdot u^2_{smo}-2\mu \cdot l_{cur})\cdot\dist^2(\vz_{k},\vx)\\
			&=\big(1-\mu(2\cdot l_{cur}-\mu \cdot u^2_{smo})\big)\cdot\dist^2(\vz_{k},\vx).
		\end{aligned}
	\end{equation}	
	We can then obtain a refined iteration point by simply choosing a step size $ \mu $ satisfying $ 2\cdot l_{cur}-\mu \cdot u^2_{smo} >0$, which gives $ \mu<2\cdot l_{cur}/u^2_{smo} $. 
	For convenience, we set $ t:=\mu(2\cdot l_{cur}-\mu \cdot u^2_{smo})>0 $. Then from (\ref{iter_down}), we have
	\begin{equation}\label{iter_dowm1}
		\dist^2(\vz_{k+1},\vx)\leq (1-t)\dist^2(\vz_{k},\vx)\leq (1-t)^2\dist^2(\vz_{k-1},\vx)\leq(1-t)^{k+1}\dist^2(\vz_{0},\vx),
	\end{equation}
	which demonstrates the linear convergence of the algorithm.
\end{proof}

\begin{remark}\label{set_stepsize}
	The constants $ u_{smo} $ and $ l_{cur} $ depend on the values of $ \alpha_1 $, $ \alpha_2 $ and $ \rho $. The specific relationships are provided in   Lemmas \ref{smoothness} and \ref{curvature condition}. In particular, if we set $ \rho = 1/15 $ and $ 0.8\leq\alpha_1\leq\alpha_2\leq1.2 $,  we  will have $l_{cur} =0.0126 $ and $ u_{smo}=1.58 $.	 
\end{remark}

Next, we analyze the case with noise. The following theorem shows that WF attains $ \varepsilon $ accuracy in a relative sense within $ \mathcal{O}(\log(1/\varepsilon)) $ iterations, matching the result of the Truncated Wirtinger Flow method.
\begin{theorem}[Stability]
	Consider the noisy case where $ y_j =|\langle \va_j, \vx\rangle|^2+b_j+\eta_j,\,j=1,2,\ldots,m $ with $ \bm{\eta}=(\eta_1,\ldots,\eta_m) $ representing the noise. As before, assume $\alpha_1|\langle \va_j, \vx\rangle|^2 \leq b_j\leq \alpha_2|\langle \va_j, \vx\rangle|^2 $, where $ \alpha_1 $ and $ \alpha_2 $ are positive constants.
	Suppose the noise is bounded such that $ \frac{\|\bm\eta\|}{\sqrt{m}}\leq c\|\vx\| $ for some positive constant $ c $. Then when $ m\geq Cn $ with $ C $ sufficiently large, with probability greater than $ 1-c_1\exp(-c_2m) $, starting from any $ \vz_0\in\mathcal{S_{\vx}(\rho)} $ the iteration (\ref{iteration}) yields 
	\[
	\dist(\vz_{k+1},\vx)\leq c_3\frac{\|\bm \eta \|}{\sqrt{m}\|\vx\|} + (1-t_1)^{\frac{k+1}{2}}\|\vx\|.
	\]
	Here $ c_3 $, $ t_1 $ are positive constants.
\end{theorem}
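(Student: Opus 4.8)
The plan is to reduce the noisy analysis to the noiseless contraction of Theorem~\ref{mainresult1} by treating the noise as a bounded additive perturbation of the gradient. Write $\nabla f(\vz)$ for the gradient built from the noisy data $y_j=|\va_j^*\vx|^2+b_j+\eta_j$ and $\nabla f_0(\vz)$ for the gradient built from the noiseless data $|\va_j^*\vx|^2+b_j$. Subtracting the two expressions, the signal and background terms cancel and only the noise survives, giving the clean decomposition
\[
\nabla f(\vz)=\nabla f_0(\vz)-\frac{1}{m}\sum_{j=1}^m\frac{\eta_j\,(\va_j^*\vz)}{|\va_j^*\vz|^2+b_j}\,\va_j=:\nabla f_0(\vz)+\vh(\vz).
\]
Lemmas~\ref{smoothness} and~\ref{curvature condition} apply verbatim to $\nabla f_0$, so the entire argument hinges on controlling $\vh(\vz)$ uniformly over $\vz\in\mathcal{S}_\vx(\rho)$.

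To bound $\vh(\vz)$, I would test against an arbitrary unit vector $\vu$ and apply Cauchy--Schwarz in $j$. The standing assumption $|\va_j^*\vx|\ge C_\vx\|\vx\|$ together with $b_j\ge\alpha_1|\va_j^*\vx|^2$ yields $|\va_j^*\vz|^2+b_j\ge b_j\ge\alpha_1 C_\vx^2\|\vx\|^2$, while the elementary inequality $|\va_j^*\vz|^2/(|\va_j^*\vz|^2+b_j)^2\le 1/(4b_j)$ strips the $\vz$-dependence from the summand. Combining these with the spectral concentration $\|\frac1m\sum_j\va_j\va_j^*-I\|\le\delta$, valid with probability $1-c_1\exp(-c_2m)$ once $m\ge Cn$, produces a uniform bound of the form
\[
\|\vh(\vz)\|\le C_h\,\frac{\|\bm\eta\|}{\sqrt m\,\|\vx\|}\qquad\text{for every }\vz\in\mathcal{S}_\vx(\rho),
\]
with $C_h$ depending only on $\alpha_1$, $C_\vx$ and $\delta$. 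The essential feature is that this bound is independent of both $\vz$ and the iteration index $k$.

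Writing $d_k:=\dist(\vz_k,\vx)$ and $e:=C_h\|\bm\eta\|/(\sqrt m\|\vx\|)$, I would then reproduce the expansion in \eqref{iter_down} with $\nabla f=\nabla f_0+\vh$: the smoothness bound becomes $\|\nabla f(\vz_k)\|\le u_{smo}d_k+e$, and the curvature term loses at most $\|\vh(\vz_k)\|\,d_k\le e\,d_k$ by Cauchy--Schwarz, so that
\[
d_{k+1}^2\le(1-t)\,d_k^2+2\mu e(1+\mu u_{smo})\,d_k+\mu^2e^2,
\]
with $t=\mu(2l_{cur}-\mu u_{smo}^2)$ exactly as before. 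The cross term $e\,d_k$ is the only genuinely new ingredient; I would absorb it with Young's inequality, $2\mu e(1+\mu u_{smo})d_k\le\tfrac t2 d_k^2+C'e^2$, to reach the clean affine recursion $d_{k+1}^2\le(1-t_1)d_k^2+C_ee^2$ with $t_1=t/2$. Unrolling the geometric sum gives $d_{k+1}^2\le(1-t_1)^{k+1}d_0^2+(C_e/t_1)e^2$, and taking square roots with $d_0\le\rho\|\vx\|\le\|\vx\|$ and $\sqrt{a+b}\le\sqrt a+\sqrt b$ yields precisely the claimed estimate, with $c_3=\sqrt{C_e/t_1}\,C_h$.

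The main obstacle --- and the point where the noise budget $\|\bm\eta\|/\sqrt m\le c\|\vx\|$ is indispensable --- is that the iterates must remain in $\mathcal{S}_\vx(\rho)$ for Lemmas~\ref{smoothness} and~\ref{curvature condition} to keep applying, yet in the noisy regime $d_k$ no longer decreases monotonically but settles toward a noise floor of size $\Theta(e)$. I would close this by induction: assuming $d_k\le\rho\|\vx\|$, the affine recursion gives $d_{k+1}^2\le(1-t_1)\rho^2\|\vx\|^2+C_ee^2$, which stays below $\rho^2\|\vx\|^2$ as long as $C_ee^2\le t_1\rho^2\|\vx\|^2$; since $e\le C_hc\|\vx\|$, this is guaranteed whenever $c$ is taken small enough relative to $t_1,\rho,C_e,C_h$. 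Because the three concentration bounds hold uniformly across the whole neighborhood, a single event of probability $1-c_1\exp(-c_2m)$ suffices for all iterations at once, which is exactly why the failure probability here --- unlike in Theorem~\ref{mainresult1} --- carries no factor of $k$.
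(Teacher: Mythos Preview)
Your proposal is correct and shares the paper's core strategy: decompose $\nabla f=\nabla f_{\text{exact}}-\nabla f_{\text{noise}}$, bound $\|\nabla f_{\text{noise}}(\vz)\|$ uniformly by a constant times $\|\bm\eta\|/(\sqrt m\,\|\vx\|)$ using $b_j\ge\alpha_1|\va_j^*\vx|^2\ge\alpha_1 C_\vx^2\|\vx\|^2$ together with the spectral bound on $A$, and then feed this into the smoothness and curvature estimates of Lemmas~\ref{smoothness} and~\ref{curvature condition}.

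The tactical difference is in how the perturbed one-step inequality is processed. The paper argues by cases: when $\|\vh\|$ exceeds a fixed multiple of the noise floor, the perturbed smoothness and curvature constants $(u_{smo}+\epsilon_{c_1},\,l_{cur}-\epsilon_{c_1})$ still produce a strict contraction; when $\|\vh\|$ is already below the floor, one checks directly that a single step cannot push the iterate out by more than $O(\|\bm\eta\|/\sqrt m)$. You instead keep the cross term $e\,d_k$ and absorb it via Young's inequality into a single affine recursion $d_{k+1}^2\le(1-t_1)d_k^2+C_ee^2$, which you then unroll. Your route is cleaner and yields explicit constants automatically; the paper's case split makes the ``contract far away, stabilize near the floor'' picture more transparent but leaves the Case~2 argument somewhat informal. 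Both approaches need the noise budget $\|\bm\eta\|/\sqrt m\le c\|\vx\|$ for exactly the reason you isolate---to keep the iterates inside $\mathcal{S}_\vx(\rho)$ so that the lemmas continue to apply---and both are right that a single high-probability event suffices for all iterations, since the relevant concentration bounds hold uniformly over the neighborhood.
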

\begin{proof}
	Under noisy measurements, we first decompose the gradient at $ \vz $ as:
	\begin{align*}
		\nabla f(\vz)
		&=\frac{1}{m}\sum_{j=1}^{m}\left(1-\frac{|\va_j^*\vx|^2+b_j+\eta_j}{|\va_j^*\vz|^2+b_j}\right)\va_j\va_j^*\vz\\
		&=\frac{1}{m}\sum_{j=1}^{m}\left(1-\frac{|\va_j^*\vx|^2+b_j}{|\va_j^*\vz|^2+b_j}\right)\va_j\va_j^*\vz-\frac{1}{m}\sum_{j=1}^{m}\frac{\eta_j}{|\va_j^*\vz|^2+b_j}\va_j\va_j^*\vz\\
		&:=\nabla f_{exact}(\vz)-\nabla f_{noise}(\vz).
	\end{align*}
	Define $ \bm w = (w_1, w_2,\ldots,w_m)\zz $ with $ w_j = \frac{\eta_j}{|\va_j^*\vz|^2+b_j}\va_j^*\vz $, allowing us to rewrite the noise-induced gradient term as $\nabla f_{noise}(\vz) =\frac{1}{m} A\bm w  $. Observing that 
	$
	|w_j|\leq\frac{|\eta_j||\va_j^*\vz|}{2\sqrt{\alpha_1}|\va_j^*\vz||\va_j^*\vx|}\leq \frac{|\eta_j|}{2\sqrt{\alpha_1}C_\vx\|\vx\|}
	$,  we obtain
	$
	\|\bm w\|^2\leq \frac{\|\bm \eta\|^2}{4\alpha_1C^2_\vx\|\vx\|^2}.
	$
	By Lemma \ref{sub_gaussian_concentration}, for any $ \delta>0 $, we have
	\begin{equation}\label{fnoisenorm}
		\|\nabla f_{noise}(\vz)\| =\left\|\frac{1}{m} A\bm w \right\|\leq \left\|\frac{1}{\sqrt{m}}A\right\|\left\|\frac{1}{\sqrt{m}}\bm w\right\|\leq (1+\delta)\frac{\|\bm\eta\|/\sqrt{m}}{2\sqrt{\alpha_1}C_\vx\|\vx\|}
	\end{equation}
	with probability greater than $ 1-\exp(-c'n) $ if $ m/n $ sufficiently large.
	
	From Theorem \ref{mainresult1}, we know that when the gradient at $ \vz_{k} $ satisfies the local smoothness and curvature conditions, the distance between $ \vz_{k+1}=\vz_k-\mu\nabla f(\vz_k) $ and the exact signal $ \vx $ can be reduced with an appropriate step size.
	However, these conditions may not be satisfied due to the presence of noise. We now examine two cases based on whether these conditions are satisfied:
	\begin{itemize}
		\item Case 1: Suppose 
		\[
		c_1\frac{\|\bm \eta\|/\sqrt{m}}{\|\vx\|}\leq \|\vh\|\leq c_2
		\]  
		where $ \vh = e^{-i\phi(\vz)}\vz-\vx $ and $ c_1 $ is a sufficiently large constant depending on $ l_{cur} $ (requirements specified later).
		In this case, the noise-induced gradient term satisfies
		\[
		\|\nabla f_{noise}(\vz)\|\leq (1+\delta)\frac{\|\bm\eta\|/\sqrt{m}}{2\sqrt{\alpha_1}C_\vx\|\vx\|}\leq \frac{(1+\delta)}{2\sqrt{\alpha_1}C_\vx\cdot c_1}\|\vh\|:=\epsilon_{c_1}\|\vh\|,
		\]
		where $ \epsilon_{c_1}=\frac{(1+\delta)}{2\sqrt{\alpha_1}C_\vx\cdot c_1} $.
		Thus, the smoothness condition holds:
		\[
		\|\nabla f(\vz)\|\leq \|\nabla f_{exact}(\vz)\|+\|\nabla f_{noise}(\vz)\|    \leq (u_{smo}+\epsilon_{c_1})\|\vh\|.
		\]
		For the curvature condition, we obtain
		\[
		|\Re\left( \langle\nabla f_{noise}(\vz), \vh\rangle\right)|\leq \|\nabla f_{noise}(\vz)\|\cdot\|\vh\|\leq \epsilon_{c_1}\|\vh\|^2,
		\]
		so
		\begin{align*}
			|\Re\left( \langle\nabla f(\vz), \vh\rangle\right)|&=|\Re\left( \langle\nabla f_{exact}(\vz), \vh\rangle\right)-\Re\left( \langle\nabla f_{noise}(\vz), \vh\rangle\right)|\\
			&\geq l_{cur}\|\vh\|^2-\|\nabla f_{noise}(\vz)\|\cdot\|\vh\|\\
			&\geq (l_{cur}-\epsilon_{c_1})\|\vh\|^2.
		\end{align*}
		Thus if  $c_1$ is chosen so that $\epsilon_{c_1}=\frac{(1+\delta)}{2\sqrt{\alpha_1}C_\vx\cdot c_1}<l_{cur}$, i.e., $ c_1>\frac{(1+\delta)}{2\sqrt{\alpha_1}C_\vx\cdot l_{cur}} $, the curvature condition holds. 
		Therefore, by Theorem \ref{mainresult1}, there exists  $ t_1>0 $ such that 
		\[
		\dist(\vz_{k+1},\vx)\leq (1-t_1)^{\frac{k+1}{2}}\|\vx\|.
		\]
		\item Case 2: Suppose 
		\[
		\|\vh\|\leq c_1\frac{\|\bm \eta\|/\sqrt{m}}{\|\vx\|}.
		\]
		Here the iteration may not reduce the distance between the iterate and the ground truth. However, each step only changes the distance by at most $ O(\|\bm \eta\|/\sqrt{m})$,  so the error cannot increase by more than a constant multiple of $ \|\bm \eta\|/\sqrt{m} $.
		When $ \|\bm \eta\|/\sqrt{m} $ is small enough to ensure $c_1\frac{\|\bm \eta\|/\sqrt{m}}{\|\vx\|}\leq c_2  $, we have 
		\[
		\dist(\vz_{k+1},\vx)\leq c_3 \frac{\|\bm \eta\|}{\sqrt{m}\|\vx\|},
		\]
		where $ c_3 $ is a positive constant. As iterations progress, the sequence will eventually reach Case 1.
	\end{itemize}
	Thus, we have established the stability of the Wirtinger Flow algorithm under bounded noise.
\end{proof}
\section{Incremental Wirtinger Flow for Poisson phase retrieval}\label{section3}
In this section, we further develop an incremental variant of the Wirtinger Flow method, hereafter referred to as Incremental Wirtinger Flow (IWF). In contrast to traditional WF, which calculates the gradient in each iteration as the average over the entire dataset:\newline
$ \{(\va_1,y_1,b_1),(\va_2,y_2,b_2),\ldots,(\va_m,y_m,b_m)\} $, IWF uses only a single randomly selected observation $ \{(\va_{i_k},y_{i_k},b_{i_k})\} $ per iteration. This modification reduces the computational cost per iteration by a factor of  $m$, making IWF advantageous for large-scale problems.

As previously discussed, although various incremental algorithms have been proposed, our IWF distinguishes itself by its implementation simplicity. Unlike other approaches, IWF requires no additional truncation condition checks to ensure convergence, which streamlines the update steps and reduces computational complexity. This advantage makes IWF particularly attractive for large-scale applications, where efficient and straightforward algorithms are essential. 

To perform IWF, we select an index $ i_s $ uniformly at random from $ \{1,2,\ldots, m\} $, and then use the observation $ \{(\va_{i_s},y_{i_s},b_{i_s})\} $ for the $ s $-th iteration:
\begin{align}\label{inc_iter}
	\vz^{(s+1)}&=\vz^{(s)} -\mu\nabla f_{i_s}(\vz^{(s)})
\end{align}
with 
\[
\nabla f_{i_s}(\vz^{(s)})=\left(1-\frac{y_{i_s}}{|\va_{i_s}^*\vz^{(s)}|^2+b_{i_s}}\right)\va_{i_s}\va_{i_s}^*\vz^{(s)}.
\]

Note that there are two sources of randomness here: one in the generation of the measurements $ \va_j $ and the other in the selection of observations at each algorithm iteration. Here we focus only on the first source of randomness. For the second source, methods from \cite{jeong2017convergence, tan2019phase} can be applied to establish a convergence basin, with hitting time used to record when iteration points exit this basin.	
Using a similar analytical approach, we demonstrate that the IWF method can linearly converge to the ground truth in expectation with high probability.
\begin{theorem}
	Under the same settings as in Theorem \ref{mainresult1}, let the 
	initial point $ \vz^{(1)}$ satisfy $ \vz^{(1)}\in\mathcal{S}_\vx(\rho) $ with $ \rho<1 $. Solving the problem by executing iteration step (\ref{inc_iter}), there exist constants $ C,\,C_1,\,c_1,\,c_2$ such that when $ \mu<\frac{c_1}{n} $ and $ m\geq Cn $, we have, with probability greater than $ 1-C_1m\exp(-c_2n) $,
	\[
	\E_{\mathcal{I}_s}\left[\dist^2(\vz^{(s+1)},\vx) \right]\leq \left(1-\frac{t}{n}\right)^{s}\|\vx\|,
	\]
	where $ \E_{\mathcal{I}_s} [\cdot] $ denotes the expectation with respect to the algorithm randomness $ \mathcal{I}_s=\{i_1,i_2,\ldots,i_s\} $, conditioned on the high probability event of random measurements $ \{\va_j\}_{j=1}^{m} $.
\end{theorem}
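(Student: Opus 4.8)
The plan is to prove a one-step contraction \emph{in conditional expectation} over the random index and then iterate it by the tower property. Throughout, I work on a high-probability event (over the measurements $\{\va_j\}_{j=1}^{m}$) on which the smoothness and curvature bounds of Lemmas~\ref{smoothness} and~\ref{curvature condition} hold together with the auxiliary second-moment estimate derived below; as the remark preceding the statement makes explicit, I treat the iterates as remaining in $\mathcal{S}_\vx(\rho)$ and defer the convergence-basin / hitting-time control to \cite{jeong2017convergence, tan2019phase}. The driving identity is unbiasedness: since $i_s$ is uniform on $\{1,\dots,m\}$, \[ \E_{i_s}\!\left[\nabla f_{i_s}(\vz^{(s)})\right] = \frac{1}{m}\sum_{j=1}^{m}\nabla f_j(\vz^{(s)}) = \nabla f(\vz^{(s)}), \] so a single incremental step is, in conditional mean, a full Wirtinger-Flow step, and the first-order analysis of Theorem~\ref{mainresult1} can be reused for the cross term.

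First I would fix $\vz^{(s)}$, set $\vh^{(s)} = \vz^{(s)} - e^{i\phi(\vz^{(s)})}\vx$ so that $\|\vh^{(s)}\| = \dist(\vz^{(s)},\vx)$, and use the suboptimality of the phase of $\vz^{(s)}$ for $\vz^{(s+1)}$ to write \[ \dist^2(\vz^{(s+1)},\vx) \le \|\vh^{(s)}\|^2 - 2\mu\,\Re\langle \nabla f_{i_s}(\vz^{(s)}),\vh^{(s)}\rangle + \mu^2\|\nabla f_{i_s}(\vz^{(s)})\|^2. \] Taking $\E_{i_s}[\cdot]$ and applying unbiasedness replaces the cross term by $\Re\langle \nabla f(\vz^{(s)}),\vh^{(s)}\rangle$, which the curvature condition bounds below by $l_{cur}\|\vh^{(s)}\|^2$. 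The new ingredient, absent in the batch proof, is the second-moment term $\E_{i_s}\|\nabla f_{i_s}(\vz^{(s)})\|^2 = \frac{1}{m}\sum_{j=1}^{m}\|\nabla f_j(\vz^{(s)})\|^2$, and the whole theorem rests on an estimate of the form \[ \frac{1}{m}\sum_{j=1}^{m}\|\nabla f_j(\vz)\|^2 \le C_0\, n\,\dist^2(\vz,\vx), \qquad \vz\in\mathcal{S}_\vx(\rho). \]

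This second-moment bound is the main obstacle, and it is precisely where the factor $n$ (hence the rate $1-t/n$) enters. Writing $\nabla f_j(\vz) = w_j\,\va_j\va_j^*\vz$ with $w_j = 1 - y_j/(|\va_j^*\vz|^2+b_j)$, so that $\|\nabla f_j(\vz)\|^2 = (|w_j|\,|\va_j^*\vz|)^2\|\va_j\|^2$, I would first prove a clean per-term bound $\|\nabla f_j(\vz)\|^2 \le \beta\,\|\va_j\|^2\,|\va_j^*\vh^{(s)}|^2$: in the noiseless regime the numerator of $w_j$ factors as $(|\va_j^*\vz|-|\va_j^*\vx|)(|\va_j^*\vz|+|\va_j^*\vx|)$, the reverse triangle inequality gives $\big||\va_j^*\vz|-|\va_j^*\vx|\big|\le|\va_j^*\vh^{(s)}|$, and the background lower bound $b_j\ge\alpha_1|\va_j^*\vx|^2$ combined with AM--GM controls the remaining ratio by a constant $\beta=\beta(\alpha_1)$. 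It then remains to show $\frac{1}{m}\sum_{j=1}^{m}\|\va_j\|^2\,|\va_j^*\vh|^2 \lesssim n\|\vh\|^2$ uniformly in $\vh$. Here a union bound over $j=1,\dots,m$ forces $\|\va_j\|^2\le(1+\delta)n$ for \emph{every} $j$ — this is exactly the source of the $C_1 m\exp(-c_2 n)$ in the probability — after which the degree-four form collapses to $(1+\delta)n\cdot\frac{1}{m}\sum_{j=1}^{m}|\va_j^*\vh|^2$, and the standard uniform (over the sphere, via an $\epsilon$-net, valid once $m\ge Cn$) concentration $\frac{1}{m}\sum_{j=1}^{m}|\va_j^*\vh|^2\approx\|\vh\|^2$ finishes the estimate with $C_0=O(\beta)$.

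Finally I would assemble the pieces into \[ \E_{i_s}\!\left[\dist^2(\vz^{(s+1)},\vx)\right] \le \big(1 - 2\mu l_{cur} + \mu^2 C_0 n\big)\,\dist^2(\vz^{(s)},\vx). \] Writing $\mu=\theta/n$, the bracket equals $1 - \theta(2l_{cur}-\theta C_0)/n$, which contracts exactly when $\theta<2l_{cur}/C_0$; setting $c_1 = 2l_{cur}/C_0$, any step size $\mu<c_1/n$ yields a one-step factor $1-t/n$ with $t=\mu n(2l_{cur}-\mu n C_0)>0$. Iterating this conditional contraction through the tower property over $\mathcal{I}_s=\{i_1,\dots,i_s\}$ gives $\E_{\mathcal{I}_s}[\dist^2(\vz^{(s+1)},\vx)] \le (1-t/n)^s\dist^2(\vz^{(1)},\vx)\le(1-t/n)^s\rho^2\|\vx\|^2$, which matches the asserted bound (reading the right-hand side as $\|\vx\|^2$ and using $\rho<1$). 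The structural reason linear convergence survives incrementalization is worth emphasizing: in the noiseless case every $\nabla f_j$ vanishes at $\vz=e^{i\phi}\vx$, so the stochastic-gradient second moment is proportional to $\dist^2$ rather than bounded below by a fixed floor, placing us in the interpolation regime where a constant $O(1/n)$ step size suffices for geometric decay.
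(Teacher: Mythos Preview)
Your proposal is correct and follows essentially the same route as the paper: the same one-step expansion, the curvature condition (Lemma~\ref{curvature condition}) for the cross term, and for the second moment the per-term bound $|v_j|\le(1+1/(2\sqrt{\alpha_1}))|\va_j^*\vh|$ from the proof of Lemma~\ref{smoothness} combined with the event $\{\max_j\|\va_j\|^2\lesssim n\}$ (the paper uses $6n$ in place of your $(1+\delta)n$), which is precisely the union-bound step producing the $m\exp(-c_2 n)$ term. Your explanation of where the factor $n$ and the step-size restriction $\mu<c_1/n$ originate is in fact more explicit than the paper's, which compresses the second-moment derivation into the single line~(\ref{inc_con1}) citing Lemma~\ref{smoothness}.
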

\begin{proof}
	To prove the theorem, it suffices to demonstrate that
	\[
	\E_{i_s}\left[\dist^2(\vz^{(s+1)},\vx)\right]\leq\left(1-\frac{t}{n}\right)\dist^2(\vz^{(s)},\vx)   
	\]
	holds with high probability for all $ \vz^{(s)} $ such that $ \dist(\vz^{(s)},\vx) \leq \rho \|\vx\|$.
	
	We define $ \vh= e^{-i\phi(\vz^{(s)})}\vz^{(s)}-\vx $.  
	Following a similar approach as in the proof of Theorem \ref{mainresult1}, we have
	\begin{equation}\label{inc_con}
		\begin{aligned}
			\E_{i_s}\left[\dist^2(\vz^{(s+1)},\vx)\right]&\leq\E_{i_s}\left[\|\vz^{(s+1)}-e^{i\phi(\vz^{(s)})}\vx\|^2\right]\\
			&=\E_{i_s}\left[\|\vz^{(s)}-e^{i\phi(\vz^{(s)})}\vx-\mu\nabla f_{i_s}(\vz^{(s)})\|^2\right]\\
			&=\dist^2(\vz^{(s)},\vx)+\frac{\mu^2}{m}\sum_{j=1}^{m}\bigg(1-\frac{y_j}{|\va_j^*\vz^{(s)}|^2+b_j}\bigg)^2\|\va_j\|^2|\va_{j}^*\vz^{(s)}|^2\\
			&\quad -\frac{2\mu}{m}\sum_{j=1}^{m}\left(1-\frac{y_j}{|\va_j^*\vz^{(s)}|^2 +b_j}\right)\Re\big((\va_j^*\vz^{(s)})\,e^{-i\phi(\vz^{(s)})}(\vh^*\va_j)\big)
		\end{aligned}
	\end{equation}  
	Next, we define the event $ E:=\{\max_{1\leq j\leq m} \|\va_j\|^2\leq 6n\} $, which holds with probability $1-m\exp(-1.5n) $. Under this event and using Lemma \ref{smoothness} and Lemma \ref{curvature condition}, we obtain
	\begin{align}\label{inc_con1}
		\frac{1}{m}\sum_{j=1}^{m}\bigg(1-\frac{y_j}{|\va_j^*\vz^{(s)}|^2+b_j}\bigg)^2\|\va_j\|^2|\va_{j}^*\vz^{(s)}|^2\leq c_{11}n\|\vh\|^2
	\end{align}
	and
	\begin{align}\label{inc_con2}
		\frac{1}{m}\sum_{j=1}^{m}\left(1-\frac{y_j}{|\va_j^*\vz^{(s)}|^2 +b_j}\right)\Re\big((\va_j^*\vz^{(s)})\,e^{-i\phi(\vz^{(s)})}(\vh^*\va_j)\big)\geq c_{12}\|\vh\|^2,
	\end{align}
	where $ c_{11} $ and $ c_{12} $	are positive constants. By setting $ \mu < \frac{2c_{12}}{nc_{11}}$ and substituting inequalities (\ref{inc_con1}) and (\ref{inc_con2}) into (\ref{inc_con}), we derive
	\begin{align*} 
		\E_{i_s}\left[\dist^2(\vz^{(s+1)},\vx)\right]&\leq
		(1+\mu^2 c_{11}n-2\mu c_{12})\cdot\dist^2(\vz^{(s)},\vx)<\left(1-\frac{t}{n}\right)\cdot\dist^2(\vz^{(s)},\vx),
	\end{align*}
	where $ t>0 $. This completes the proof. 
\end{proof}
\section{Numerical experiments}\label{section4}  
In this section, we conduct a series of numerical experiments to validate our theoretical findings. The first experiment focuses on determining an appropriate step size and the optimal number of measurements for the WF-Poisson algorithm. The second experiment investigates the impact of the choice of $\vb$ on the convergence behavior of the WF-Poisson algorithm. Finally, the third experiment compares the recovery performance of the WF algorithms under both Poisson noise (WF-Poisson) and Gaussian noise (WF-Gaussian), highlighting the advantages of the WF-Poisson model in handling Poisson noise.

\subsection{Step size selection}    
To optimize the performance of the WF-Poisson algorithm, we first aim to identify a suitable step size $\mu$ that ensures efficient convergence. Below, we outline the experimental setup and step size strategies under consideration.

The signal $\vx\in\C^n$ is generated as a Gaussian random vector with $n = 100$.
The measurement vectors $\va_j\in\C^n,\,j=1,2,\ldots,m $ are 
sampled independently from a complex Gaussian distribution. Each $\va_j$ is scaled by a constant to ensure that the average value of $ |\va_j^*\vx|^2 $ is 2. 
For each $j$, we define $b_j = |\va_j^*\vx|^2$, corresponding to the case where $\alpha_1=\alpha_2=1$.
The Poisson noise is modeled as $\eta_j = \eta\cdot\text{Poisson}(|\va_j^*\vx|^2+b_j) $, where $\eta $ controls the noise level. The observed measurements are then expressed as: $y_j=|\va_j^*\vx|^2+b_j+\eta_j $, $j=1,2,\ldots,m$. For the iterative update rule:
\[
\vz_{k+1} = \vz_k - \mu \nabla f(\vz_k),
\]
we evaluate three different strategies for selecting the step size $\mu$.
\par
\textbf{1. Heuristic step size:}

Following the heuristic step size strategy proposed in \cite{candes_wf}, the step size $\mu$ is defined as:
\[
\mu = \min\big(1-\exp(-t/330),\, 0.2\big),
\]
where $t$ is the iteration index. This approach starts with a small step size in the early iterations and gradually increases as the number of iterations grows, promoting faster convergence in later stages.
\par
\textbf{2. Constant step size:}

Theoretically, we have proven that when $\rho \leq 1/15$ and $ 0.8\leq \alpha_1\leq \alpha_2 \leq 1.2$, the step size $\mu \leq 0.01$ (Remark \ref{set_stepsize}) guarantees linear convergence. However, in practice, larger step sizes can also achieve linear convergence. Here we set $\mu = 0.2$ as a constant step size for all iterations, which aligns with the maximum step size used in the heuristic strategy.     
\par
\textbf{3. Step size based on observed Fisher information:}

This method utilizes the observed Fisher information, as introduced in \cite{li2021algorithms}. The step size $\mu$ is calculated as:
\[
\mu = \frac{\|\nabla f(\vz_k)\|^2}{(A \nabla f(\vz_k))^* D (A \nabla f(\vz_k))},
\]
where $D = \text{Diag}\big(|A\vz_k|^2./(|A\vz_k|^2+b)\big) $. This adaptive step size formulation ensures that the update direction is scaled appropriately based on the curvature of the loss function.

To evaluate the effectiveness of the three step size strategies, we conducted two experiments: success rate analysis and iteration convergence analysis. The success rate experiment assesses the recovery performance of the step size strategies and determines the minimum number of measurements required for stable and accurate signal recovery. The iteration convergence experiment then compares the convergence speed of the strategies under fixed measurement conditions.

In the success rate experiment, the number of measurements  $ m $ is varied from $3n$ to $5n$ in increments of $0.2n$, allowing a systematic investigation of how the  quantity of measurementaffects recovery performance. A recovery trial is deemed successful if the normalized root mean square error (NRMSE) of the reconstructed signal is less than $ 0.5*10^{-3}$ for $\eta=10^{-3}$ or $0.5$ for $\eta=0.1$ within 500 iterations. For each value of $m$, 100 independent trials are conducted, with randomized signals and measurement matrices. The success rate is calculated as the proportion of successful trials. As shown in Figure \ref{success_m_n}, a higher measurement-to-signal ratio ($ m/n$) consistently improves recovery stability, with all step size strategies performing similarly. Based on these results, we set $ m=5n$ for subsequent experiments to ensure stable and reliable convergence.

In the iteration convergence analysis, we fix the number of measurements at  $m=5n$ and compare the iteration descent of NRMSE under the same noise levels ($\eta=10^{-3}$ and $\eta=0.1$). The results, averaged over $50$ independent trials, are shown in Figure \ref{nrmse_iterate}. Although the success rate experiment indicates that all step size strategies achieve comparable recovery performance, the convergence analysis reveals that the constant step size achieves slightly faster convergence compared to the other strategies. Based on these findings, we adopt the constant step size for all subsequent experiments to ensure efficient and reliable algorithm implementation.

\begin{figure}[htbp!]   	
	\begin{center}
		\subfigure[]{
			\includegraphics[width=0.45\textwidth]{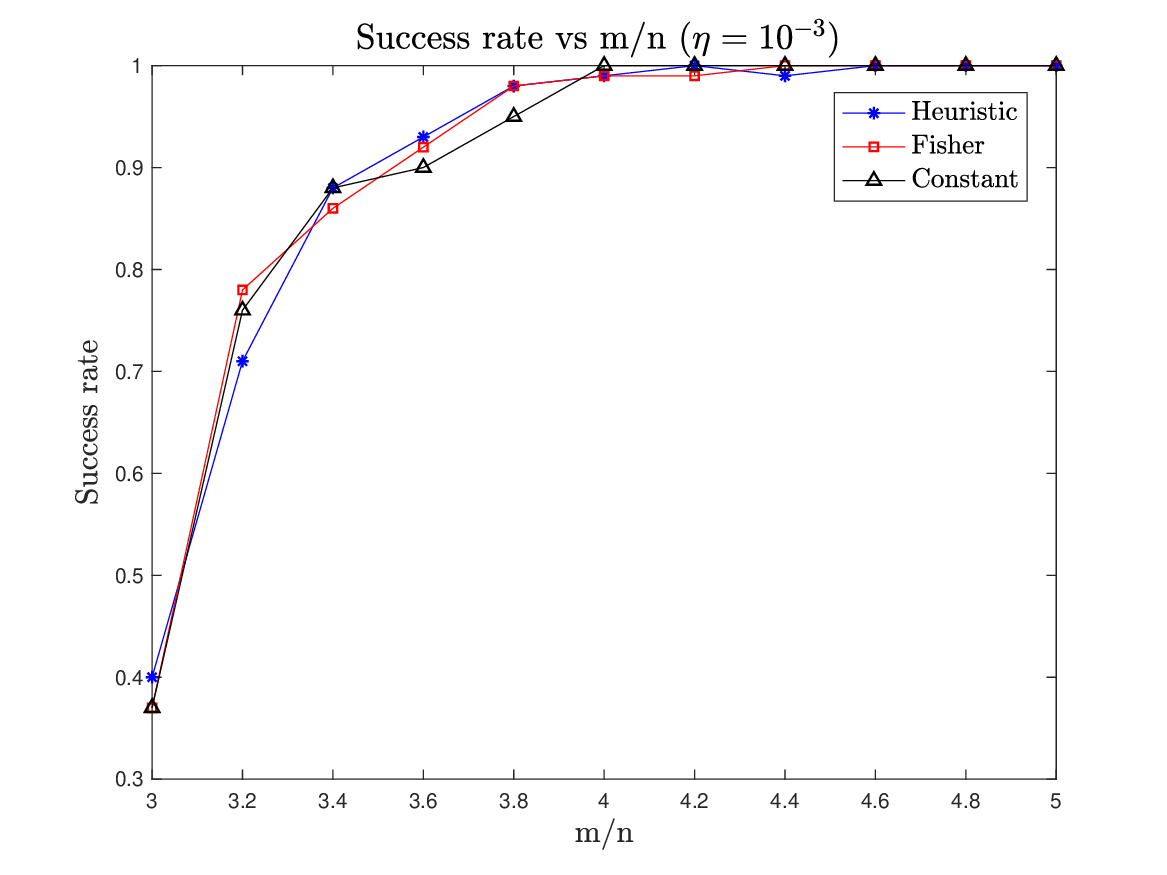}}
		\subfigure[]{
			\includegraphics[width=0.45\textwidth]{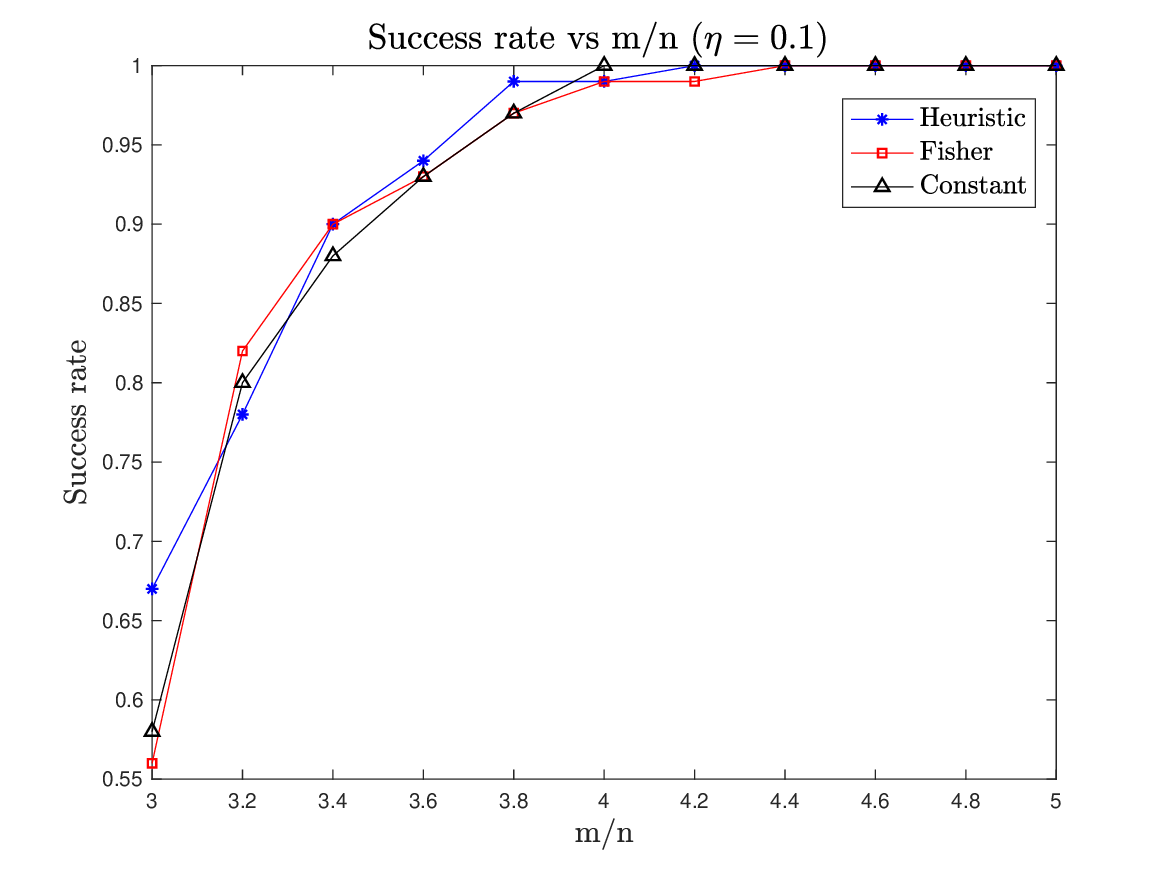}}
		\caption{Success rate: The success rate is calculated over 100 trials for varying measurement numbers, with $ n = 100 $ and $ m/n\in[3:0.2:5] $, under noise levels (a) $\eta = 10^{-3}$ and (b) $\eta = 0.1$.
		}\label{success_m_n}
	\end{center}
\end{figure}

\begin{figure}[htbp]
	\begin{center}
		\subfigure[]{
			\includegraphics[width=0.45\textwidth]{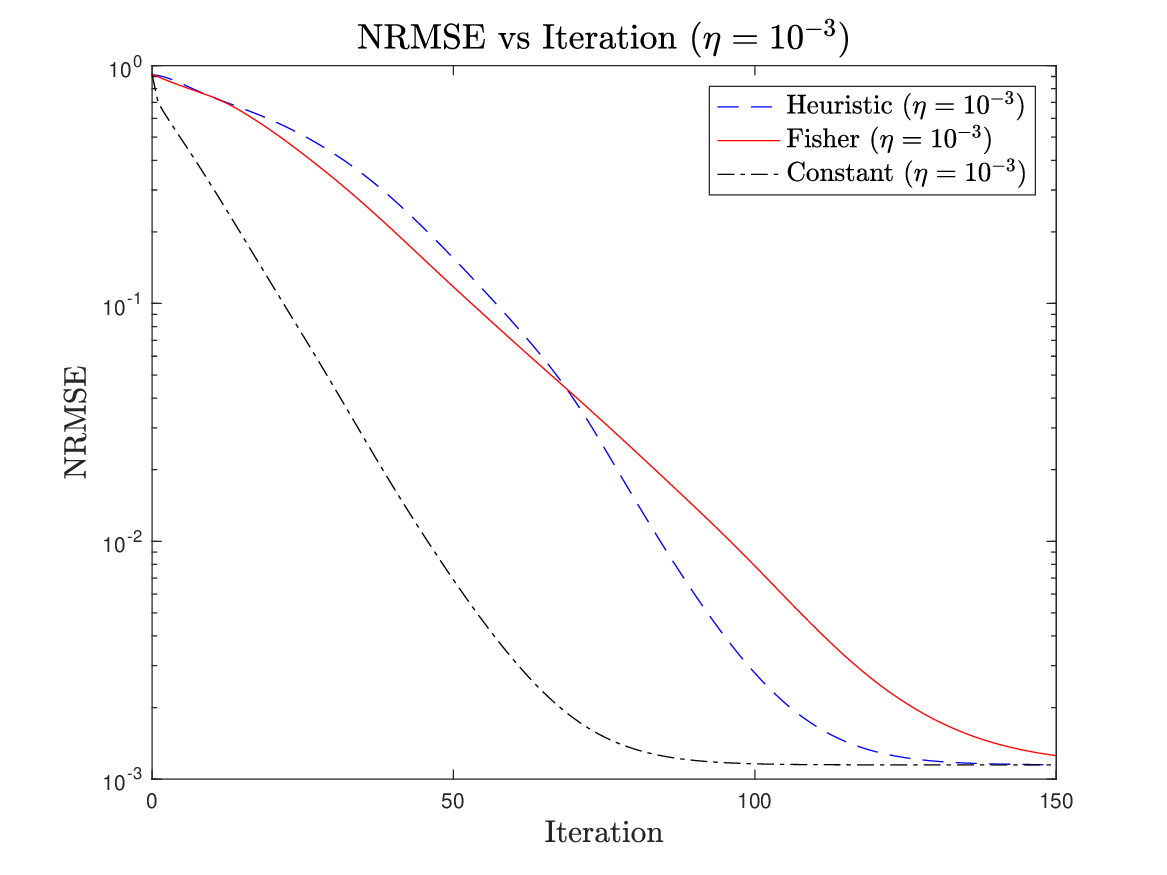}}
		\subfigure[]{
			\includegraphics[width=0.45\textwidth]{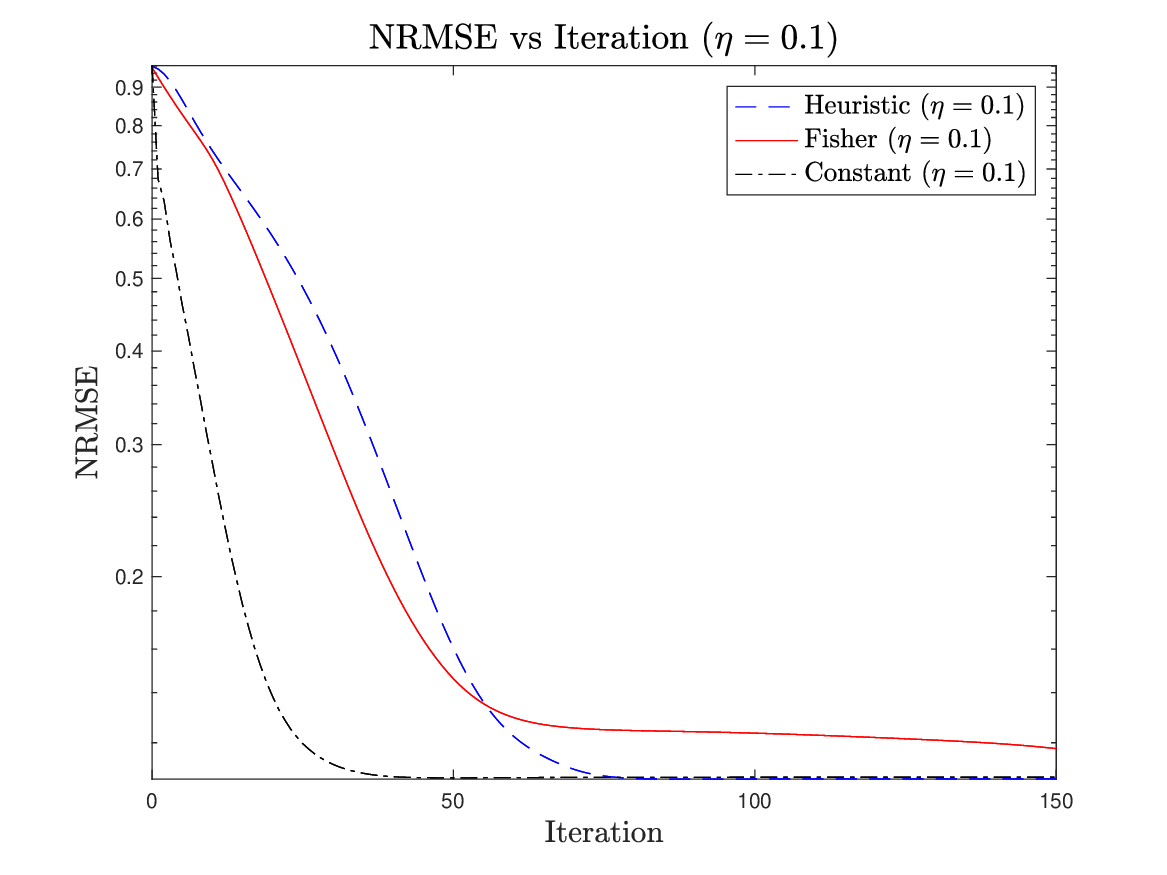}}
		\caption{Convergence: NRMSE per iteration, with $ n = 100 $, $ m=5n $ under  noise levels (a) $\eta = 10^{-3}$ and (b) $\eta = 0.1$.
		}\label{nrmse_iterate}
	\end{center}
\end{figure}   
\subsection{The influence of $\vb$.}    

To investigate the impact of the variable $b$ on the convergence behavior of the algorithm, we set  $\alpha_1=[0.01,0.05,0.1,0.5,1]$ and define  $\alpha_2 = 1/\alpha_1$. The variable $b$ is constrained to satisfy $\alpha_1|\langle \va_j, \vx\rangle|^2 \leq b_j\leq \alpha_2|\langle \va_j, \vx\rangle|^2 $. To achieve this, we construct $b = s.*|A\vx|^2$, where $s$ is a random vector distributed as $s\sim  \exp\big(\text{rand}(m,1)*(\log(\alpha_2)-\log(\alpha_1))+\log(\alpha_1)\big)$. This construction ensures that $b$ is distributed within the interval $[\alpha_1,\alpha_2]\cdot|A\vx|^2$, while maintaining $\mathbb{E}(\log(\vb)) = \mathbf{0}$.
%
We conduct experiments under two noise levels, $\eta = 10^{-3}$ and $\eta = 0.1$. The results, presented in Figure \ref{b_inf}, demonstrate that the algorithm consistently converges under all tested settings. Moreover, the convergence properties improve significantly when both $\alpha_1$ and $\alpha_2$ are closer to 1, indicating that a tighter concentration of $\vb$ around $|A\vx|^2$ enhances the efficiency of the algorithm.
\begin{figure}[htbp]
	\begin{center}
		\subfigure[]{
			\includegraphics[width=0.45\textwidth]{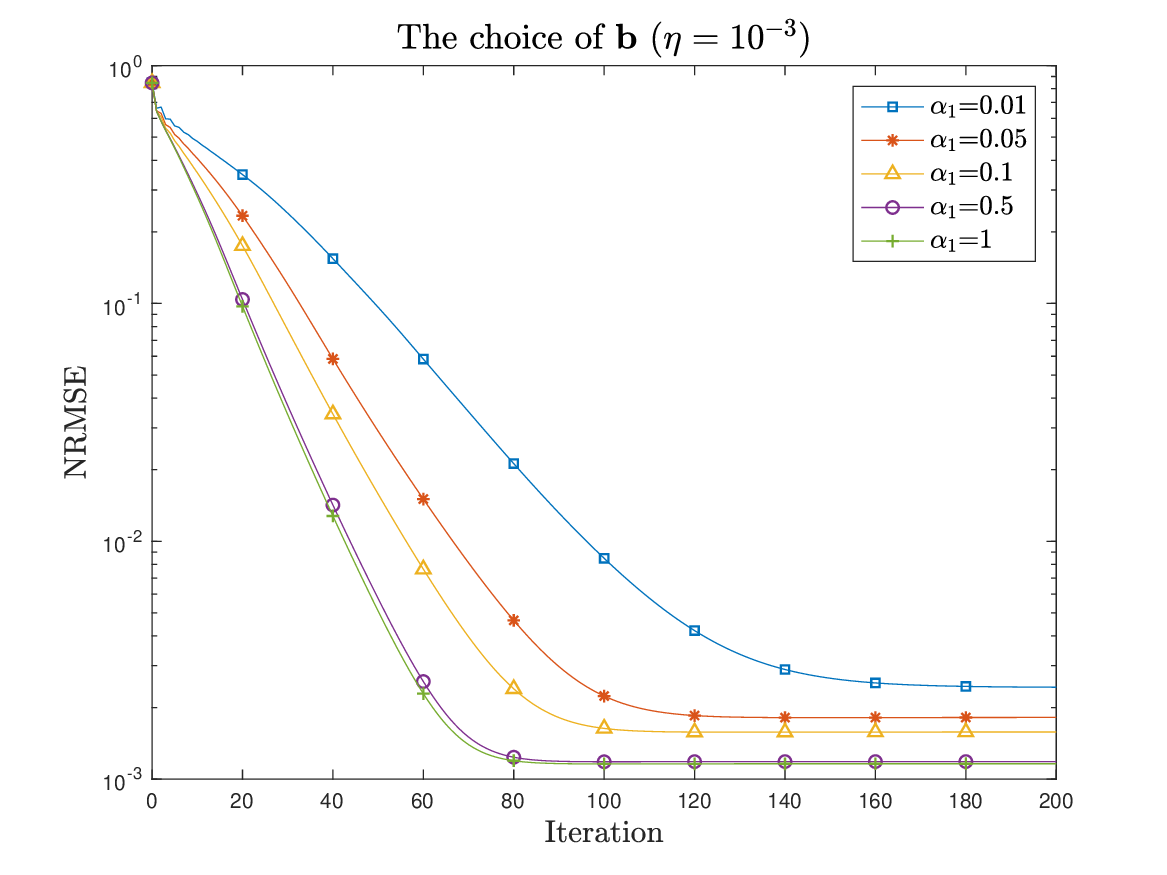}}
		\subfigure[]{
			\includegraphics[width=0.45\textwidth]{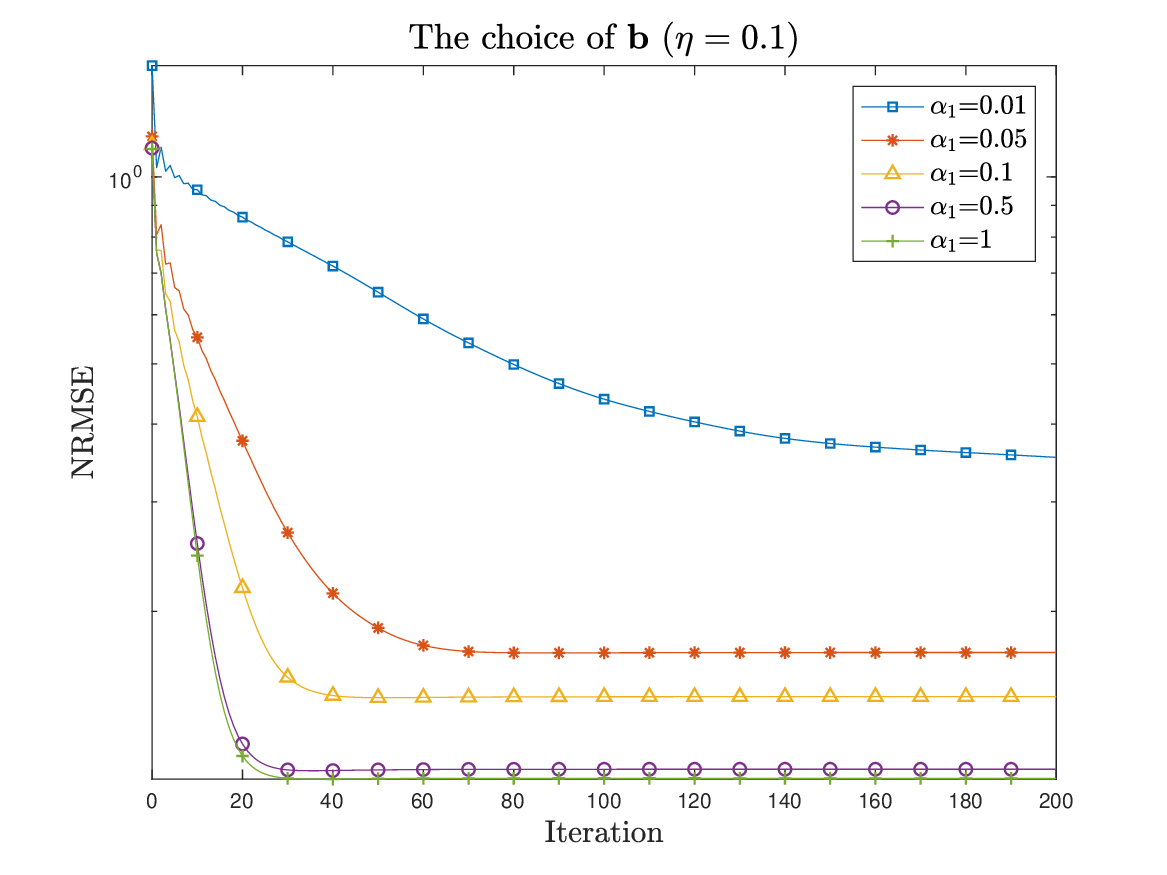}}
		\caption{Influence of $b$: record the NRMSE of each iteration step with different $\alpha_1$ with (a)  $\eta = 10^{-3}$ and (b) $\eta = 0.1$. Here $ n = 100 $, $ m=5n $, $\alpha_2 = 1/\alpha_1$.}\label{b_inf}
	\end{center}
\end{figure}   
\subsection{Compare with Gaussian model.}  

In this experiment, we compare the performance of the WF-Poisson algorithm and the WF-Gaussian algorithm under two different noise types: Poisson noise and Gaussian noise. The WF-Gaussian algorithm is implemented according to the method described in \cite{candes_wf}. To evaluate their performance, we compute the NRMSE after 500 iterations for both algorithms while varying the measurement ratio $m/n=[3:0.2:5]$. The experiments are conducted under two noise levels, $\eta = 10^{-3}$ and $\eta = 0.1$. For each configuration, the NRMSE is averaged over $50$ independent randomized trials to ensure robust comparison.

The results, shown in Figures \ref{compare}, present (a) and (b) for Poisson noise and (c) and (d) for Gaussian noise. The findings demonstrate that the WF-Poisson algorithm achieves superior recovery accuracy under Poisson noise, whereas the WF-Gaussian algorithm performs better under Gaussian noise. These findings emphasize the critical importance of selecting a model that aligns with the underlying noise distribution.

\begin{figure}[htbp]
	\centering
	\subfigure[Poisson noise ($\eta=10^{-3}$)]{
		\includegraphics[width=0.45\textwidth]{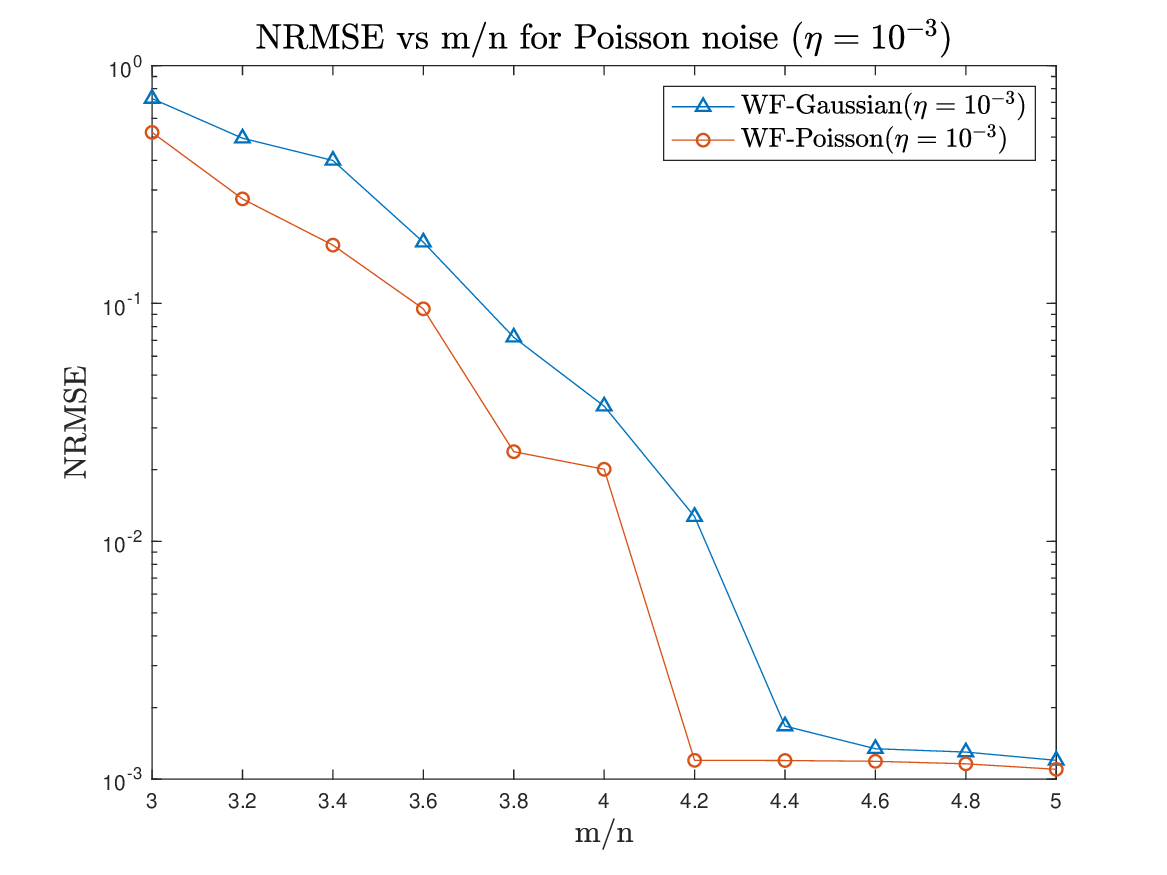}}
	\quad
	\subfigure[Poisson noise ($\eta=0.1$)]{
		\includegraphics[width=0.45\textwidth]{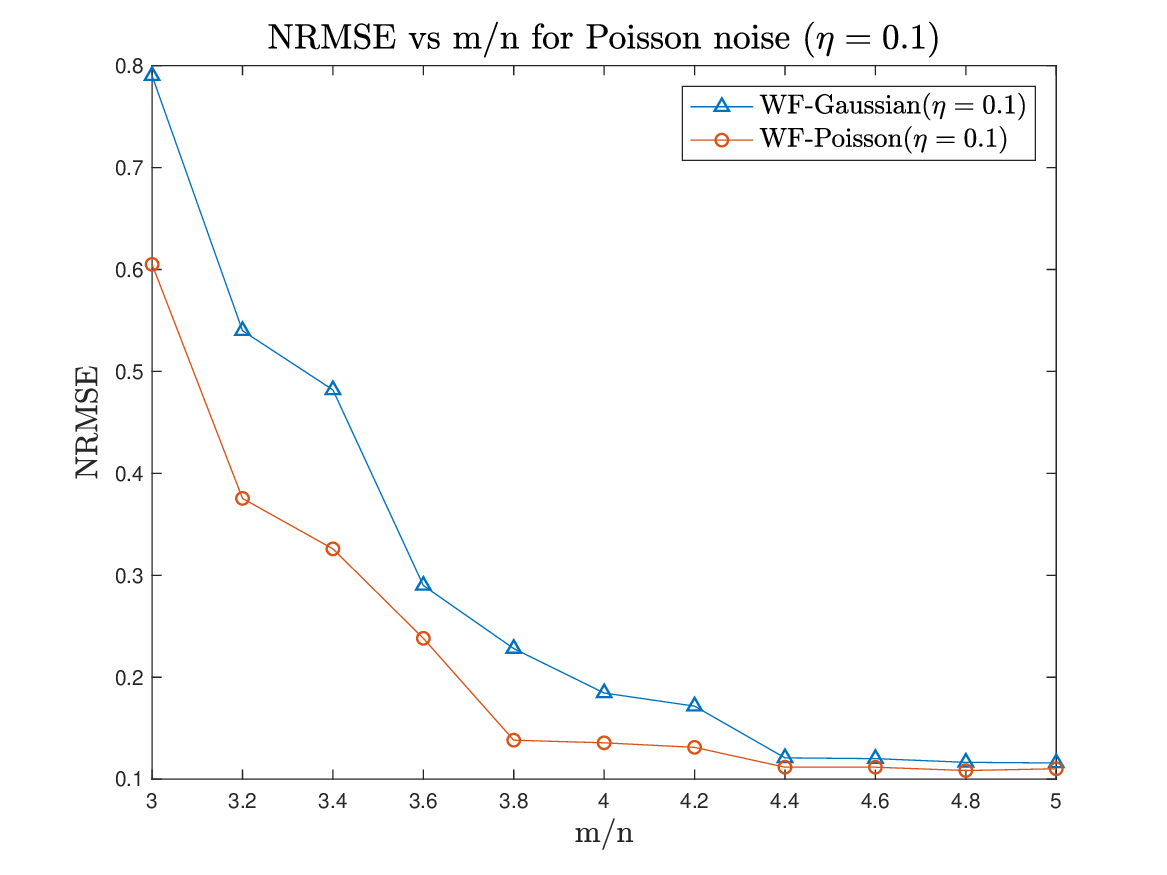}}
	\\
	\subfigure[Gaussian noise ($\eta=10^{-3}$)]{
		\includegraphics[width=0.45\textwidth]{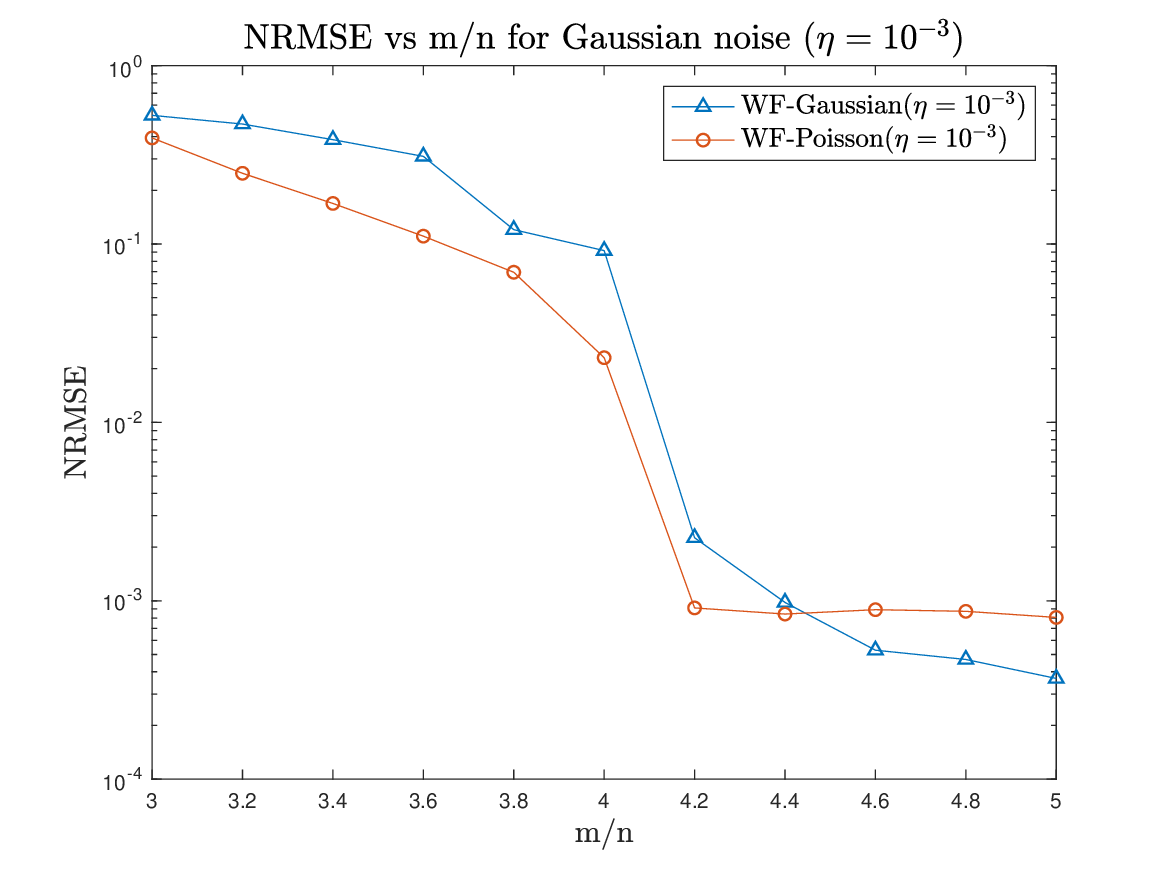}}\quad
	\subfigure[Gaussian noise ($\eta=0.1)$]{
		\includegraphics[width=0.45\textwidth]{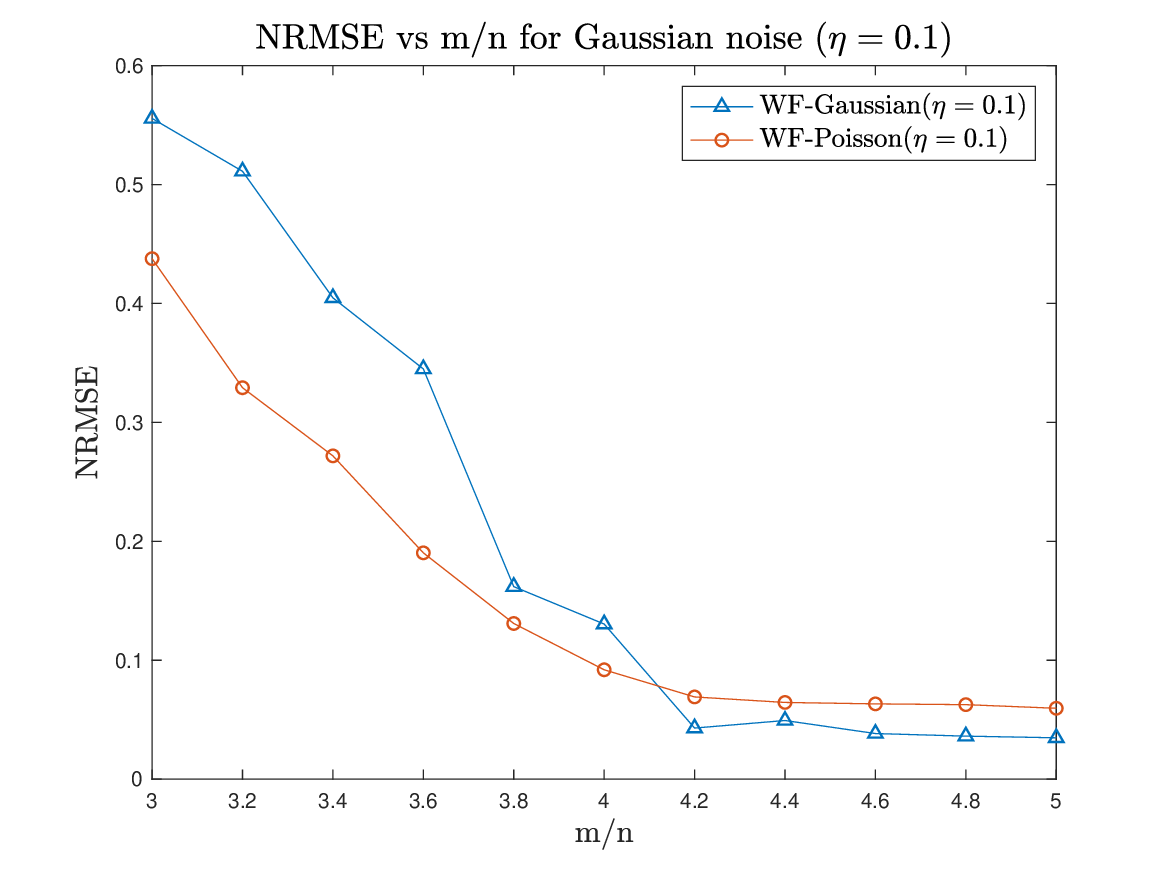}}
	\caption{Comparison of WF-Gaussian and WF-Poisson: NRMSE is recorded for different measurements under two noise types. (a) and (b) show results under Poisson noise, while (c) and (d) show results under Gaussian noise.
	}\label{compare}
\end{figure}              
\section{Conclusion}\label{conclude}

This paper provides a rigorous theoretical analysis of the Wirtinger Flow (WF) method for Poisson phase retrieval. We established that, under noiseless conditions and with an optimal number of measurements, WF achieves linear convergence to the true signal. Additionally, we demonstrated that WF remains robust and stable when dealing with bounded noise.  Building on the simplicity of WF, we proposed an incremental variant of WF method that processes one measurement at a time, significantly reducing computational cost. Our theoretical analysis showed that this incremental algorithm convergences to the true signal with high probability in the noiseless case, while maintaining performance comparable to more complex incremental methods.  

In future research, we could explore some adaptive step-size strategies and evaluate algorithm performance under more general noise models, such as Poisson-Gaussian noise.

\appendix 
\section{Local smoothness condition and local curvature condition}\label{sec_appA}
In our previous convergence analysis, we relied heavily on both the local smoothness and curvature conditions. This section focuses on presenting and proving these two key properties.
\begin{lemma}[Smoothness Condition]\label{smoothness}
	Under the same assumptions as Theorem \ref{mainresult1}, for any $ \delta>0 $, there exist constants $ C_\delta, c_\delta >0$  such that for $ m\geq C_\delta n $, we have 
	\[
	\|\nabla f(\vz)\|\leq \Big(1+\frac{1}{2\sqrt{\alpha_1}}\Big)(1+\delta)\cdot\textup{dist}(\vx,\vz)
	\]
	with probability at least $ 1-\exp(-c_\delta n) $ for any $\vz\in\C^n$. In particular, by setting $ \alpha_1= 0.8 $ and $ \delta=0.01 $, we obtain with high probability that
	\[
	\|\nabla f(\vz)\|\leq 1.58\cdot\textup{dist}(\vx,\vz).
	\]
\end{lemma}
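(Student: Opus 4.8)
The plan is to exploit the noiseless structure $y_j = |\va_j^*\vx|^2 + b_j$ to collapse the gradient into a sum of rank-one contributions whose scalar coefficients admit a clean deterministic bound, and then to pass to the operator norm of the design matrix, which is the only quantity requiring a probabilistic estimate. Substituting $y_j$ turns the coefficient $1 - \frac{y_j}{|\va_j^*\vz|^2+b_j}$ into $\frac{|\va_j^*\vz|^2 - |\va_j^*\vx|^2}{|\va_j^*\vz|^2+b_j}$, and using $\va_j\va_j^*\vz = (\va_j^*\vz)\va_j$ I would write
\[
\nabla f(\vz) = \frac{1}{m}\sum_{j=1}^m p_j \va_j, \qquad p_j = \frac{\big(|\va_j^*\vz|^2 - |\va_j^*\vx|^2\big)(\va_j^*\vz)}{|\va_j^*\vz|^2+b_j}.
\]
Writing $A$ for the matrix with rows $\va_j^*$ and $\bm{p} = (p_1,\dots,p_m)\zz$, this is exactly $\nabla f(\vz) = \frac1m A^*\bm{p}$, so that $\|\nabla f(\vz)\| \le \frac1m\|A\|\,\|\bm{p}\|$.

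The heart of the argument is a pointwise bound on each $|p_j|$. Introducing $\vh = e^{-i\phi(\vz)}\vz - \vx$ so that $\|\vh\| = \dist(\vx,\vz)$, the reverse triangle inequality gives $\big||\va_j^*\vz| - |\va_j^*\vx|\big| \le |\va_j^*\vh|$. Factoring $|\va_j^*\vz|^2 - |\va_j^*\vx|^2 = (|\va_j^*\vz| - |\va_j^*\vx|)(|\va_j^*\vz| + |\va_j^*\vx|)$ and writing $s=|\va_j^*\vz|$, $t=|\va_j^*\vx|$, I would bound the remaining factor by splitting
\[
\frac{(s+t)s}{s^2+b_j} = \frac{s^2}{s^2+b_j} + \frac{st}{s^2+b_j} \le 1 + \frac{st}{s^2+\alpha_1 t^2} \le 1 + \frac{1}{2\sqrt{\alpha_1}},
\]
where the first term is controlled by $b_j>0$, the second uses $b_j \ge \alpha_1 t^2$, and the final inequality is AM--GM applied to $s^2 + \alpha_1 t^2 \ge 2\sqrt{\alpha_1}\,st$. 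Combining these gives the deterministic estimate $|p_j| \le \big(1 + \frac{1}{2\sqrt{\alpha_1}}\big)|\va_j^*\vh|$ for every $j$, valid for all $\vz$ and every realization of the measurements. This is precisely the step that produces the advertised constant.

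Summing squares then yields $\|\bm{p}\|^2 \le \big(1+\frac{1}{2\sqrt{\alpha_1}}\big)^2\sum_j|\va_j^*\vh|^2 = \big(1+\frac{1}{2\sqrt{\alpha_1}}\big)^2\|A\vh\|^2 \le \big(1+\frac{1}{2\sqrt{\alpha_1}}\big)^2\|A\|^2\|\vh\|^2$, so that
\[
\|\nabla f(\vz)\| \le \frac{\|A\|^2}{m}\Big(1+\frac{1}{2\sqrt{\alpha_1}}\Big)\|\vh\| = \frac{\|A\|^2}{m}\Big(1+\frac{1}{2\sqrt{\alpha_1}}\Big)\dist(\vx,\vz).
\]
It remains to control $\|A\|^2/m = \|\tfrac{1}{\sqrt m}A\|^2$. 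Since $\E[\va_j\va_j^*] = I_n$, standard concentration for the extreme singular values of a complex Gaussian matrix (the same estimate invoked in the stability proof, or Lemma \ref{sub_gaussian_concentration}) gives $\|\tfrac{1}{\sqrt m}A\|^2 \le 1+\delta$ on an event of probability at least $1 - \exp(-c_\delta n)$ provided $m \ge C_\delta n$, which closes the bound; inserting $\alpha_1 = 0.8$ and $\delta = 0.01$ recovers the stated constant $1.58$.

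I do not expect a serious obstacle: the proof is essentially one algebraic identity plus a single spectral-norm estimate. The point deserving care --- and what makes the conclusion uniform over all $\vz\in\C^n$ with no covering or net argument --- is that the per-measurement bound $|p_j|\le(1+\frac{1}{2\sqrt{\alpha_1}})|\va_j^*\vh|$ is entirely deterministic, so the only randomness enters through $\|A\|$, a single $\vz$-independent event. Trying instead to bound $\nabla f$ directly as a random sum of vectors $p_j\va_j$ would force a union bound over a net of $\vz$ and degrade the constant; routing everything through $\|A\|\,\|\bm{p}\|$ is what keeps both the constant sharp and the failure probability uniform.
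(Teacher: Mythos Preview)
Your proposal is correct and essentially identical to the paper's proof: both write $\nabla f(\vz)=\tfrac{1}{m}A^*\vv$, obtain the same deterministic per-coordinate bound $|v_j|\le(1+\tfrac{1}{2\sqrt{\alpha_1}})|\va_j^*\vh|$ via the factorization and AM--GM with $b_j\ge\alpha_1|\va_j^*\vx|^2$, and finish through the operator norm of $A$ (the paper invokes Lemma \ref{sub_gaussian_concentration} separately for $\sum_j|\va_j^*\vh|^2$ and for $\|A^*\|$, whereas you fold both into a single use of $\|A\|^2/m\le 1+\delta$, which is a cosmetic difference). Your remark that the uniformity over all $\vz$ comes for free because the pointwise bound is deterministic is exactly the mechanism at work.
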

\begin{proof}
	For any $ \vz\in\C^n $, let $ \vh = e^{-i\phi(\vz)}\vz -\vx $, so $ \|\vh\|=\dist(\vz, \vx) $. Define $ A=[\va_1,\va_2,\ldots,\va_m]^* \in\C^{m\times n}$ and $ \vv=[v_1,v_2,\ldots,v_m]\zz $, where $ v_j = \left(1-\frac{y_j}{|\va_j^*\vz|^2+b_j}\right)\va_j^*\vz $. Then $ \nabla f(\vz)=\frac{1}{m}A^*\vv $. Given that $ b_j\geq \alpha_1|\langle \va_j, \vx\rangle|^2$, we have
	\begin{align*}
		|v_j|&=\left|\frac{|\va_j^*\vz|^2-|\va_j^*\vx|^2}{|\va_j^*\vz|^2+b_j}\right||\va_j^*\vz|\\
		&\leq \frac{(|\va_j^*\vz|+|\va_j\vx|)\cdot |\va_j^*\vh|}{|\va_j^*\vz|^2+b_j}|\va_j^*\vz|\\
		&= \left(\frac{|\va_j^*\vz|^2}{|\va_j^*\vz|^2+b_j}+\frac{|\va_j^*\vz|\cdot|\va_j^*\vx|}{|\va_j^*\vz|^2+b_j}\right)|\va_j^*\vh|\\
		&\leq \Big(1+\frac{1}{2\sqrt{\alpha_1}}\Big)|\va_j^*\vh|.
	\end{align*}
	According to Lemma \ref{sub_gaussian_concentration}, for any $ \delta'>0 $ and  $ m\geq C_{\delta'}n $ with a sufficiently large constant $ C_{\delta'} $, the inequality
	\[
	\|\vv\|^2=\sum_{j=1}^{m}|v_j|^2\leq \Big(1+\frac{1}{2\sqrt{\alpha_1}}\Big)^2\sum_{j=1}^{m}|\va_j^*\vh|^2\leq\Big(1+\frac{1}{2\sqrt{\alpha_1}}\Big)^2(1+\delta')m\|\vh\|^2
	\]
	holds with probability at least $ 1-\exp(-c_{\delta'}n) $ for some $ c_{\delta'}>0 $. 
	Considering the Gaussian random matrix $ A $, for any $ \delta''>0 $ and $ m\geq C_{\delta''}n $, we have $ \|A^*\|\leq (1+\delta'')\sqrt{m} $ with probability at least $ 1-\exp(-c_{\delta''}n) $ (by Lemma \ref{sub_gaussian_concentration}). Combining these results, we get
	\begin{align*}
		\|\nabla f(\vz)\|&=\frac{1}{m}\|A^*\vv\|\leq \frac{1}{m}\|A^*\|\|\vv\|\\
		&\leq \Big(1+\frac{1}{2\sqrt{\alpha_1}}\Big)\sqrt{(1+\delta')}(1+\delta'')\|\vh\|\\
		&\leq\Big(1+\frac{1}{2\sqrt{\alpha_1}}\Big)(1+\delta)\|\vh\|
	\end{align*}
	with probability at least $ 1-\exp(-c_{\delta}n) $, provided $ m\geq C_\delta n$ for some $ C_\delta, c_\delta>0 $. Here, we choose $ 1+\delta\geq \sqrt{(1+\delta')}(1+\delta'') $ and $ C_\delta\geq \max\{C_\delta', C_\delta''\} $. 
\end{proof}
Next, we state and prove the curvature condition for the gradient.
\begin{lemma}[Curvature Condition]\label{curvature condition}
	Under the same assumptions as Theorem \ref{mainresult1}, there exist positive constants $ C$ and $c $ such that for any $ \vz\in\mathcal{S}_{\vx}(\rho) $ with $ \rho<1 $ and $ m\geq Cn $, we have
	\[
	\Re\left( \langle\nabla f(\vz), \vz-\vx e^{i\phi(\vz)}\rangle\right)\geq  l_{cur}\cdot \dist^2(\vz,\vx)
	\]
	with probability at least $ 1-\exp(-cn) $. Specially, by choosing $ \alpha_1=0.8 $, $ \alpha_2=1.2 $ and $ \rho = 1/15 $, we obtain $ l_{cur} = 0.0126 $. Detailed requirements for these parameters $ \alpha_1 $, $ \alpha_2 $ and $ \rho $ are provided in Lemma \ref{con_a_rho}.
\end{lemma}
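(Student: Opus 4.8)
The plan is to reduce the claimed inequality to an explicit scalar sum and then lower-bound it by combining a moment computation with a concentration estimate. Write $\vh = e^{-i\phi(\vz)}\vz-\vx$, so that $\|\vh\| = \dist(\vz,\vx)$ and $\vz-\vx e^{i\phi(\vz)} = e^{i\phi(\vz)}\vh$. Substituting the noiseless data $y_j = |\va_j^*\vx|^2+b_j$ into the gradient gives $1-\frac{y_j}{|\va_j^*\vz|^2+b_j} = \frac{\Delta_j}{|\va_j^*\vz|^2+b_j}$ with $\Delta_j := |\va_j^*\vz|^2-|\va_j^*\vx|^2$, and writing $v_j := \va_j^*\vh$ a direct expansion of the inner product yields
\[
\Re\left(\langle\nabla f(\vz),\vz-\vx e^{i\phi(\vz)}\rangle\right) = \frac{1}{m}\sum_{j=1}^m \frac{\Delta_j^2+\Delta_j|v_j|^2}{2\big(|\va_j^*\vz|^2+b_j\big)},
\]
where I have used $\Delta_j = 2\Re(\overline{\va_j^*\vx}\,v_j)+|v_j|^2$. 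This identity is the backbone of the argument: it isolates a manifestly nonnegative leading contribution $\Delta_j^2$ against a sign-indefinite correction $\Delta_j|v_j|^2$.

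Next I would exploit the background assumptions to tame the denominator. Since $\alpha_1|\va_j^*\vx|^2\le b_j\le\alpha_2|\va_j^*\vx|^2$ and $|\va_j^*\vx|\ge C_\vx\|\vx\|$, the denominator is bounded below by $\alpha_1 C_\vx^2\|\vx\|^2>0$ uniformly in $j$; this is exactly the structural feature that lets us dispense with the truncation required for the background-free Poisson model, because the summand is now a bounded function of $\va_j$. At leading order $\Delta_j^2\approx 4\big(\Re(\overline{\va_j^*\vx}\,v_j)\big)^2$, which is of order $\|\vx\|^2\|\vh\|^2$, whereas $\Delta_j|v_j|^2$ and the remaining corrections are of order $\|\vx\|\|\vh\|^3$. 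Bounding $|\Delta_j|\le\big(|\va_j^*\vz|+|\va_j^*\vx|\big)|v_j|$ and dividing by the uniform lower bound on the denominator, the correction contributes at most $O(\rho)\|\vh\|^2$ after summation, so for $\rho$ small enough the leading term dominates.

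To make this rigorous and extract explicit constants, I would pass to expectations over the complex Gaussian $\va_j$. Since $\va_j^*\vx$ and $v_j=\va_j^*\vh$ are jointly Gaussian with covariance determined by $\|\vx\|^2$, $\|\vh\|^2$ and $\langle\vx,\vh\rangle$, the expectation of the leading summand is a strictly positive multiple of $\|\vh\|^2$, while the sign-indefinite terms are controlled by $\rho$; this gives $\E[\,\cdot\,]\ge l_{cur}\|\vh\|^2$ on the stated parameter range, yielding $l_{cur}=0.0126$ when $\alpha_1=0.8$, $\alpha_2=1.2$, $\rho=1/15$, with the precise admissible region recorded in Lemma \ref{con_a_rho}. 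Because the summand is a bounded, sufficiently regular function of the Gaussian vectors, a concentration estimate (Lemma \ref{sub_gaussian_concentration}) then shows the empirical average deviates from its expectation by at most $o(\|\vh\|^2)$ once $m\ge Cn$, with probability at least $1-\exp(-cn)$, which completes the argument.

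I expect the \textbf{main obstacle} to be the interplay between the random, $\vz$-dependent rational denominator and the sign-indefiniteness of the summand. Unlike the amplitude- or intensity-loss models, the ratio $\Delta_j/(|\va_j^*\vz|^2+b_j)$ has no clean closed-form expectation, so the expectation lower bound must be obtained by carefully bounding the denominator both above and below, so as simultaneously to lower-bound the positive leading term and to upper-bound the negative correction, rather than computing it exactly. This two-sided control is also where the joint $(\alpha_1,\alpha_2,\rho)$ constraints of Lemma \ref{con_a_rho} originate.
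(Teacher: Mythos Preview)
The proposal has a genuine gap at the step where you lower-bound the positive leading contribution $\Delta_j^2/\big(|\va_j^*\vz|^2+b_j\big)$. Bounding the denominator \emph{below} by $\alpha_1 C_\vx^2\|\vx\|^2$ is useful only for \emph{upper}-bounding a ratio (hence for controlling the sign-indefinite correction); to \emph{lower}-bound a positive quotient you need an \emph{upper} bound on the denominator, and $|\va_j^*\vz|^2+b_j$ admits no uniform constant upper bound. You acknowledge the need for two-sided control, but never say what the upper bound is. For the same reason the assertion that ``the summand is now a bounded function of $\va_j$'' is incorrect: for fixed $\vh$ the summand behaves like $\Delta_j^2/|\va_j^*\vz|^2\sim|\va_j^*\vz|^2$ as $\va_j$ grows, so Lemma~\ref{sub_gaussian_concentration} (which controls only $\frac{1}{m}\sum_j|\va_j^*\vu|^2$) does not give the concentration you invoke, and the expectation of the rational summand is never actually computed.

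The paper's proof supplies precisely the missing device: it partitions the indices according to whether $|\va_j^*\vx|>|\va_j^*\tilde{\vh}|$ or not. On the first set the denominator is bounded \emph{above} by $U_1|\va_j^*\vx|^2$ and below by $L_1|\va_j^*\tilde{\vh}|^2$; on the complement the roles of $\vx$ and $\tilde{\vh}$ swap. After dividing through, the positive and negative pieces reduce to conditioned sums of the form $\frac{1}{m}\sum_j \big(\Re(\tilde{\vh}^*\va_j\va_j^*\vx)\big)^2/|\va_j^*\vx|^2\cdot I_{\{\cdot\}}$, for which the paper has a dedicated concentration lemma (Lemma~\ref{concentration}); this is where the constants $\phi_1,\phi_2,\psi,\varphi$ of Lemma~\ref{con_a_rho} originate, and the assumption $|\va_j^*\vx|\ge C_\vx\|\vx\|$ is not used at all. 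Finally, the bound so obtained is for a \emph{fixed} $\vz$; a covering argument over the direction $\tilde{\vh}$ and the scale $s=\|\vh\|$ (using a Lipschitz estimate, Lemma~\ref{hessian}) is still required to make it uniform over $\mathcal{S}_\vx(\rho)$, which your plan does not address.
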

\begin{proof}
	Without loss of generality, we assume that the target signal $\vx$ is a unite vector, i.e., $ \|\vx\|=1 $. For each $ \vz\in\C^n $, we define $ \vh = e^{-i\phi(\vz)} \vz -\vx $ and $ \tilde{\vh} := \vh/\|\vh\| $ and $ s=\|\vh\|<1 $. Given the known conditions, we have $ \Im (\vh^*\vx)=0 $ and $ \|\vh\| \leq \rho $. Using $ \vh $,  we expand:
	\begin{align*}
		\Re\big(\langle \nabla f(\vz),\, \vz-\vx e^{i\phi(\vz)}\rangle\big)
		&=\Re\big(\langle \nabla f(\vz), \, e^{i\phi(\vz)}\vh \rangle\big)	\\
		&= \frac{1}{m}\sum_{j=1}^{m}\left(1-\frac{y_j}{|\va_j^*\vz|^2 +b_j}\right)\Re\big((\va_j^*\vz)\,e^{-i\phi(\vz)}(\vh^*\va_j)\big)\\
		&=\frac{1}{m}\sum_{j=1}^{m}\frac{|\va_j^*\vz|^2 -|\va_j^*\vx|^2}{|\va_j^*\vz|^2 +b_j}\Re\big(\va_j^*(\vx+\vh)(\vh^*\va_j)\big)\\
		&=\frac{1}{m}\sum_{j=1}^{m}\frac{2\big(\Re(\vh^*\va_j\va_j^*\vx)\big)^2+ 3\Re(\vh^*\va_j\va_j^*\vx)|\va_j^*\vh|^2+|\va_j^*\vh|^4}{|\va_j^*\vx|^2 +|\va_j^*\vh|^2+2\Re(\vh^*\va_j\va_j^*\vx)+b_j}\\
		&=\frac{1}{m}\sum_{j=1}^{m}T_j,
	\end{align*}
	where $T_j=\frac{n_j}{d_j} $ denotes the $ j $-th term in the summation.
	
	To prove the conclusion, we first establish it for a fixed $ \vh $ i.e., a fixed $ \vz $ and subsequently apply a covering argument to extend the result to any $ \vz $. 
	
	\textbf{Step 1: The conclusion holds for a fixed $ \vh $.}
	
	\textbf{Case 1:} Suppose $\tilde{\vh} =c\vx$ with $\abs{c}=1$. 
	
	In this case, the condition $\Im(\tilde{\vh}^*\vx)=0$ implies that $\tilde{\vh}$ can only be $ \vx$ or $ -\vx $. Consequently, we have 
	\begin{align*}
		T_j
		&=\frac{2\big(\Re(\vh^*\va_j\va_j^*\vx)\big)^2+ 3\Re(\vh^*\va_j\va_j^*\vx)|\va_j^*\vh|^2+|\va_j^*\vh|^4}{|\va_j^*\vx|^2 +|\va_j^*\vh|^2+2\Re(\vh^*\va_j\va_j^*\vx)+b_j}\\
		&\geq\frac{\big(2\pm 3\|\vh\|+\|\vh\|^2\big)|\va_j^*\vx|^4}{(1+\alpha_2+\|\vh\|^2+2\|\vh\|)|\va_j^*\vx|^2}\|\vh\|^2\\
		&\geq \frac{\big(2- 3s+s^2\big)|\va_j^*\vx|^2}{\big((1+s)^2+\alpha_2\big)}\|\vh\|^2
	\end{align*}
	Then, by Lemma \ref{sub_gaussian_concentration}, for any $ \delta>0 $, there exist positive constants $ C_\delta$ and $c_\delta  $ such that when $ m\geq C_\delta n $, 
	\begin{equation}
		\begin{aligned}\label{specialcase}
			\Re\big(\langle \nabla f_{\vepsilon}(\vz),\, \vz-\vx e^{i\phi(\vz)}\rangle\big)
			& =\frac{1}{m}\sum_{j=1}^{m} T_j \\
			&\geq \frac{1}{m}\sum_{j=1}^{m} \frac{\big(2- 3s+s^2\big)}{\big((1+s)^2+\alpha_2\big)}|\va_j^*\vx|^2\|\vh\|^2\\
			&\geq \frac{\big(2- 3s+s^2\big)}{\big((1+s)^2+\alpha_2\big)}(1-\delta)\|\vh\|^2.
		\end{aligned}
	\end{equation}
	This inequality holds with probability greater than $ 1-\exp(-c_\delta m) $. 	
	\textbf{Case 2:}  Consider that $ \tilde{\vh}\neq \pm \vx $.
	
	Since the Gaussian random measurement $ \va $ is rotationally invariant, we have:
	\begin{equation}  \label{pro3}
		\PP(|\va^*\vx|> |\va_j^*\tilde{\vh}|)=\PP(|\va^*\vx|\leq |\va_j^*\tilde{\vh}|)=1/2.
	\end{equation}
	For each index set $ I\subseteq \{1,2,\ldots,m\} $, we define the corresponding event:
	$$
	\E_I:=\bigl\{ |\va_j^*\vx|>|\va_j^*\tilde{\vh}|, \,\,\forall j\in I ;\,\, |\va_k^*\vx|\leq|\va_k^*\tilde{\vh}|, \,\,\forall k\in I^c\bigr\} .
	$$
	According to (\ref{pro3}), the event $ \E_I  $ occurs with probability $ 1/2^m $.  We assume that  $ I_0 $ is an index set satisfying $ \frac{m}{4}\leq|I_0|\leq \frac{3m}{4} $. Under event $ \E_{I_0} $, $\Re\big(\langle \nabla f_{\vepsilon}(\vz),\, \vz-\vx e^{i\phi(\vz)}\rangle\big) $
	can be divided into two  groups:
	\begin{align*}
		m\,\Re\big(\langle \nabla f(\vz), \, \vz-e^{i\phi(\vz)}\vx \rangle\big)
		=\sum_{j\in I_0}T_j+\sum_{k\in I_0^c}T_k.
	\end{align*}
	We now proceed to analyze the lower bounds for each group. First, we establish bounds for the denominators $ d_j $, $ j=1,\ldots,m$.  For $ j\in I_0 = \big\{j\,:\, |\va_j^*\vx|>|\va_j^*\tilde{\vh}|\big\} $,  we have
	\begin{equation}\label{1upperbound}
		\begin{aligned}
			d_j & = |\va_j^*\vx|^2 +|\va_j^*\vh|^2+2\Re(\vh^*\va_j\va_j^*\vx)+b_j\\
			&\leq(1+\|\vh\|^2+2\|\vh\|+\alpha_2)|\va_j^*\vx|^2= U_1|\va_j^*\vx|^2.
		\end{aligned}
	\end{equation}
	where $ U_1:= (1+s)^2+\alpha_2 $. Here we use the fact that $ \|\vh\|\leq \rho $.
	On the other hand, we have the following lower bound:
	\begin{equation}\label{1lowerbound}
		\begin{aligned}
			d_j & = |\va_j^*\vx|^2 +|\va_j^*\vh|^2+2\Re(\vh^*\va_j\va_j^*\vx)+b_j\\
			&\geq|\va_j^*\vx|((1+\alpha_1)|\va_j^*\vx|-2|\va_j^*\vh|)+|\va_j^*\vh|^2\\
			&\geq\big((1-\|\vh\|)^2+\alpha_1\big)|\va_j^*\tilde{\vh}|^2= L_1|\va_j^*\tilde{\vh}|^2
		\end{aligned}
	\end{equation}
	where $ L_1:= (1-s)^2+\alpha_1.$	
	Similarly, for $ k\in  I_0^c = \big\{k\,:|\va_k^*\vx|\leq|\va_k^*\tilde{\vh}|\big\} $, we have
	\begin{equation}\label{2upperbound}
		\begin{aligned}
			d_k &=|\va_k^*\vx|^2 +|\va_k^*\vh|^2+2\Re(\vh^*\va_k\va_k^*\vx)+b_j\\
			&\leq \big(1+\alpha_2+2\|\vh\|+\|\vh\|^2\big)|\va_k^*\tilde{\vh}|^2= U_2|\va_k^*\tilde{\vh}|^2
		\end{aligned}
	\end{equation}
	where $ U_2:=(s+1)^2+\alpha_2=U_1 $ and
	\begin{equation}\label{2lowerbound}
		\begin{aligned}
			d_k &=|\va_k^*\vx|^2 +|\va_k^*\vh|^2+2\Re(\vh^*\va_k\va_k^*\vx)+b_j\\
			&\geq\alpha_1|\va_k^*\vx|^2+(|\va_k^*\vx|-|\va_k^*\vh|)^2=L_2|\va_k^*\vx|^2
		\end{aligned}
	\end{equation}
	where $ L_2 := \alpha_1$.
	
	Based on the bounds established in (\ref{1upperbound}) and  (\ref{1lowerbound}), and utilizing Lemma \ref{concentration}, we analyze the lower bound of $  \sum_{j\in I_0}T_j $. For a sufficiently small $ \delta>0 $, when $ |I_0| \geq C_1(\delta)n $ with probability at least $ 1-\exp\big(-c_1(\delta)\cdot|I_0|\big) $, we have
	\begin{equation}\label{i}
		\begin{aligned}
			\sum_{j\in I_0}T_j
			&= \sum_{j\in I_0}\Bigg(\frac{\big(\sqrt{2}\Re(\vh^*\va_j\va_j^*\vx)+\frac{3}{2\sqrt{2}}|\va_j^*\vh|^2\big)^2}{d_j} - \frac{|\va_j^*\vh|^4}{8d_j}\Bigg)\\
			& \geq \sum_{j\in I_0} \Bigg(\frac{2\big(\Re(\vh^*\va_j\va_j^*\vx)\big)^2- 3|\vh^*\va_j\va_j^*\vx||\va_j^*\vh|^2}{U_1|\va_j^*\vx|^2} - \frac{|\va_j^*\vh|^4}{8L_1|\va_j^*\tilde{\vh}|^2}\Bigg)\\
			&\geq\sum_{j\in I_0}\Bigg(\frac{2\|\vh\|^2}{U_1}\frac{\big(\Re(\tilde{\vh}^*\va_j\va_j^*\vx)\big)^2}{|\va_j^*\vx|^2} - \frac{3\|\vh\|^3}{U_1}|\va_j^*\tilde{\vh}|^2-\frac{|\va_j^*\vh|^2}{8L_1}\|\vh\|^2\Bigg)\\
			&\geq|I_0|\cdot\|\vh\|^2\Bigg(\frac{2}{U_1}\Big(\frac{1}{8}+\frac{7}{32}\Re^2(\tilde{\vh}^*\vx)\Big)-\frac{3}{2U_1}\|\vh\|-\frac{\|\vh\|^2}{16L_1}-\frac{\delta}{4}\Bigg)\\
			&\geq|I_0|\cdot\|\vh\|^2\Big(\phi_1+ \frac{7}{16U_1}\Re^2(\tilde{\vh}^*\vx)\Big),
		\end{aligned}
	\end{equation}	
	where $ \phi_1 := \frac{1-6s}{4U_1}-\frac{s^2}{16L_1}-\frac{\delta}{4}$. Here, the third inequality is derived from Lemma \ref{concentration}. 
	
	Similarly, according to (\ref{2upperbound}), (\ref{2lowerbound}) and Lemma \ref{concentration}, when $ |I_0^c|\geq C_2(\delta)n $, with probability at least $ 1-\exp\big(-c_2(\delta)\cdot|I_0^c|\big) $, we have
	\begin{equation}\label{ic}
		\begin{aligned}
			\quad	&\sum_{k\in I_0^c} T_k= \sum_{k\in I_0^c} \Bigg( \frac{\big(\frac{3}{2}\Re(\vh^*\va_k\va_k^*\vx)+|\va_k^*\vh|^2\big)^2}{d_k}-\frac{\big(\Re(\vh^*\va_k\va_k^*\vx)\big)^2}{4d_k} \Bigg)\\
			&\geq \sum_{k\in I_0^c}\Bigg(\frac{\frac{9}{4}\big(\Re(\vh^*\va_k\va_k^*\vx)\big)^2+ 3\Re(\vh^*\va_k\va_k^*\vx)|\va_k^*\vh|^2+|\va_k^*\vh|^4}{U_2|\va_k^*\tilde{\vh}|^2}-\frac{\big(\Re(\vh^*\va_k\va_k^*\vx)\big)^2}{4L_2|\va_k^*\vx|^2}\Bigg )\\
			&\geq|I_0^c|\cdot\|\vh\|^2 \Bigg(\frac{9}{4U_2}\Big(\frac{1}{8}+\frac{7}{32}\Re^2(\tilde{\vh}^*\vx)\Big)+\frac{3\|\vh\|}{2U_2}\Re(\tilde{\vh}^*\vx)+\frac{\|\vh\|^2}{2U_2}-\frac{1}{4L_2}\Big(\frac{3}{8}+\frac{9}{32}\Re^2(\tilde{\vh}^*\vx)\Big)-\frac{\delta}{4}\Bigg)\\
			&\geq|I_0^c|\cdot \|\vh\|^2\Bigg(\phi_2+\psi\cdot\Re^2(\tilde{\vh}^*\vx)+\frac{3\|\vh\|}{2U_2}\Re(\tilde{\vh}^*\vx)\Bigg),
		\end{aligned}
	\end{equation}
	where $ \phi_2= \frac{9+16s^2}{32U_2}-\frac{3}{32L_2}-\frac{\delta}{4}$ and $ \psi=\frac{63}{128U_2}-\frac{9}{128L_2} $.
	Here the second inequality follows from Lemma \ref{concentration}.
	
	Choose a sufficiently small constant $ \delta$. Then, for a sufficiently large constant $ C\geq 4\max\{C_1(\delta), C_2(\delta)\} $, as long as $ m\geq Cn $, we have that the index set $I_0$ satisfies $ 3m/4\geq|I_0|\geq m/4 \geq C_1(\delta)n $ and $ 3m/4\geq|I_0^c|\geq m/4 \geq C_2(\delta)n $. Combining inequalities (\ref{i}) and (\ref{ic}), with probability at least $ (1-\exp(-c_3m))/2^m $,  we obtain
	\begin{equation}\label{generalcase}
		\begin{aligned}
			\quad&\Re\big(\langle \nabla f(\vz),\, \vz-\vx e^{i\phi(\vz)}\rangle\big)
			=\frac{1}{m}\Big(\sum_{j\in I_0}T_j+\sum_{k\in I_0^c}T_k\Big)\\
			&\geq\frac{1}{m}\|\vh\|^2\bigg(|I_0|\cdot \Big(\phi_1+ \frac{7}{16U_1}\Re^2(\tilde{\vh}^*\vx)\Big)+|I_0^c|\cdot\Big(\phi_2+\psi\cdot\Re^2(\tilde{\vh}^*\vx)+\frac{3\|\vh\|}{2U_2}\Re(\tilde{\vh}^*\vx)\Big) \bigg)\\
			&\geq\frac{1}{m}\|\vh\|^2\left(\frac{m}{4}\cdot (\phi_1+\phi_2)+\frac{m}{4}\cdot \left(\frac{7}{16U_1}+\psi\right)\Re^2(\tilde{\vh}^*\vx)-\frac{3m}{4}\cdot\frac{3\|\vh\|}{2U_2}|\Re(\tilde{\vh}^*\vx)|\right)\\
			&\geq\frac{\|\vh\|^2}{4}(\phi_1+\phi_2-\varphi),
		\end{aligned}
	\end{equation}
	where $ \varphi=\frac{(9s/(4U_2))^2}{7/(16U_1)+\psi}=\frac{81U_1s^2}{U_2^2(7+16U_1\psi)} $. 
	One sufficient condition for the second inequality  to hold is:
	\[
	\phi_1>0,\,\, \phi_2>0,\,\, \psi>0.
	\]
	We assert that these conditions indeed hold, with detailed analysis provided in Lemma  \ref{con_a_rho}.
	
	The number of index sets $I$  satisfying $ \frac{m}{4}\leq |I|\leq \frac{3m}{4}$ is $ \sum_{k=m/4}^{3m/4} {m\choose k}$.  Therefore, for a fixed $ \tilde{\vh} $ where $ \tilde{\vh}\neq\pm \vx $, the inequality (\ref{generalcase})
	holds with probability greater than $ \sum_{k=m/4}^{3m/4} {m\choose k}(1-\exp(-c_3m))/2^m \geq 1-\exp(-c_4m)$. 
	
	Combining (\ref{specialcase}) and (\ref{generalcase}) and defining 
	\[
	\hat{l}_{cur}:=\min\Big\{\frac{(\phi_1+\phi_2-\varphi)}{4},\frac{\big(2- 3s+s^2\big)}{\big((1+s)^2+\alpha_2\big)}(1-\delta)\Big\}>0,
	\] 
	we conclude that for a fixed vector $ \vz $,
	\begin{equation}\label{aimfunc}
		\Re\big(\langle \nabla f(\vz),\, \vz-\vx e^{i\phi(\vz)}\rangle\big)
		\geq \hat{l}_{cur}\|\vh\|^2
	\end{equation}
	holds with probability at least $ 1-\exp(-c_5m)$ provided enough measurements. This completes the proof that (\ref{aimfunc}) holds for a fixed $ \vz $, i.e., a fixed $ \tilde{\vh} $ and a fixed value $ \|\vh\|=s\leq \rho $.
	
	\textbf{Step 2: Extension to all vectors.}
	
	Observe that
	\begin{align*}
		\Re\big(\langle \nabla f(\vz),\, \vz-\vx e^{i\phi(\vz)}\rangle\big)
		=\Re\big(\langle \nabla f (\vx+\vh),\, \vh \rangle\big)=\Re\big(\langle \nabla f (\vx+\|\vh\|\tilde{\vh}),\, \|\vh\|\tilde{\vh} \rangle\big)=\Re\big(\langle \nabla f (\vx+s\tilde{\vh}),\, s\tilde{\vh} \rangle\big).
	\end{align*}
	Thus, for any unit vectors $ \tilde{\vh}_1,\, \tilde{\vh}_2\in \C^n $, we have
	\begin{align*}
		&\big|\Re\big(\langle \nabla f(\vx+s{\vh}_1), s\tilde{\vh}_1 \rangle\big)-\Re\big(\langle \nabla f (\vx+s\tilde{\vh}_2), s\tilde{\vh}_2 \rangle\big)\big| \\
		& \leq\big|\Re\big(\langle \nabla f (\vx+s\tilde{\vh}_1), s(\tilde{\vh}_1-\tilde{\vh}_2)\rangle\big)\big|
		+\big|\Re\big(\langle \nabla f (\vx+s\tilde{\vh}_1) - \nabla f (\vx+s\tilde{\vh}_2), s\tilde{\vh}_2 \rangle\big)\big|\\
		&\leq \big(s\|\nabla f(\vx+s\tilde{\vh}_1)\| + c_1s^2\big)\cdot\|\tilde{\vh}_1-\tilde{\vh}_2\| \\
		&\leq (2s^2\|\tilde{\vh}_1\| + c_1s^2\|\tilde{\vh}_2\|)\cdot \|\tilde{\vh}_1-\tilde{\vh}_2\|\\
		&<(2+c_1) s^2 \cdot\|\tilde{\vh}_1-\tilde{\vh}_2\|.
	\end{align*}
	Here, $ \xi\in \C^n $ and the third inequality follows from Lemma \ref{smoothness} and Lemma \ref{hessian}.
	
	Thus, for any $ \tilde{\vh}_1,\, \tilde{\vh}_2\in \C^n $ with $ \|\tilde{\vh}_1\|=\|\tilde{\vh}_2\|=1$ and $ \|\tilde{\vh}_1 - \tilde{\vh}_2\|\leq \eta:=\frac{\delta}{2(2+c_1)} $ with $ \delta $ sufficiently small, we have
	\begin{equation}\label{covering1}
		\Re\big(\langle \nabla f(\vx+s\tilde{\vh}_1), s\tilde{\vh}_1 \rangle\big)-\Re\big(\langle \nabla f (\vx+s\tilde{\vh}_2), s\tilde{\vh}_2 \rangle\big)\geq -\s^2\delta.
	\end{equation}
	Let $ \NN_\eta $ be an $ \eta $-net for the unit sphere of $ \C^n $ with cardinality $ |\NN_\eta|\leq (1+2/\eta)^{2n}$. Then for all $\tilde{\vh }\in \NN_\eta$ and fixed $ s\leq\rho $, when $m\geq (C_2\cdot \eta^{-2}\log \eta^{-1})n$, with probability at least $ 1-|\NN_\eta|\exp(-c_4 n) $ we have
	\begin{align}\label{covering2}
		\Re\big(\langle \nabla f(\vz),\, \vz-\vx e^{i\phi(\vz)}\rangle\big)
		&= \Re\big(\langle \nabla f (\vx+s\tilde{\vh}),\,s\tilde{\vh}\rangle\big)\geq \hat{l}_{cur} s^2.
	\end{align}
	For any $ \tilde{\vh} $ with $ \|\vh\|=1 $, there exists $ \tilde{\vh}_1\in\NN_\eta $ such that $ \| \tilde{\vh}- \tilde{\vh}_1\|\leq \eta $. Combining (\ref{covering1}) and (\ref{covering2}), we conclude that
	\begin{align*}
		\Re\big(\langle \nabla f(\vx+s\tilde{\vh}),\, s\tilde{\vh}\rangle\big)
		\geq (\hat{l}_{cur}-\delta/2)s^2.
	\end{align*}
	Applying a similar covering number argument over $ s\leq  \rho $, we
	can further conclude that for all $ \tilde{\vh} $ and $ s $,
	\begin{align*}
		\Re\big(\langle \nabla f(\vz),\, \vz-\vx e^{i\phi(\vz)}\rangle\big)=\Re\big(\langle \nabla f(\vx+s\tilde{\vh}),\, s\tilde{\vh}\rangle\big)
		\geq (\hat{l}_{cur}-\delta)\|\vh\|^2.
	\end{align*}
	holds with probability at least $ 1-\exp(-cn) $, provided $ m\geq Cn $ with a sufficiently large constant $ C $. Then the theorem is proved by setting $l_{cur}:=\hat{l}_{cur}-\delta  $.		
\end{proof}
\section{Useful Lemmas}\label{sec_appB}
In this section, we provide some useful lemmas that were applied in proving Lemmas \ref{smoothness} and \ref{curvature condition}.

\begin{lemma}[\cite{candes2013phaselift} Lemma 3.1 ]\label{sub_gaussian_concentration}
	Let $ \va_1,\va_2,\ldots,\va_m \in\C^n$ be i.i.d. Gaussian random measurements. Fix any $ \delta $ in $ (0,1/2) $ and assume $ m\geq 20\delta^{-2}n $. Then for all unit vectors $ \vu\in\C^n $,
	\[
	1-\delta\leq\frac{1}{m}\sum_{j=1}^{m}|\va_j^*\vu|^2\leq 1+\delta
	\]
	holds with probability at least $ 1-\exp(-m t^2/2) $, where $ \delta/4 = t^2+t $.
\end{lemma}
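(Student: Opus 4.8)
The plan is to prove this as a uniform concentration estimate for the quadratic form $\vu\mapsto \frac{1}{m}\sum_{j=1}^{m}\abs{\va_j^*\vu}^2=\frac{1}{m}\norm{A\vu}^2$ over the unit sphere of $\C^n$, obtained by combining a sharp pointwise large-deviation bound with an $\ep$-net discretization. The two ingredients are (i) a per-point Chernoff estimate exploiting that the summands are sub-exponential, and (ii) a covering argument that upgrades the per-point control to a uniform one.

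First I would establish the pointwise bound. By rotational invariance of the complex Gaussian law, for a fixed unit vector $\vu$ the inner product $\va_j^*\vu$ is a standard complex Gaussian with $\E\abs{\va_j^*\vu}^2=1$, so $\abs{\va_j^*\vu}^2$ is an exponential random variable of mean one and $\sum_{j=1}^{m}\abs{\va_j^*\vu}^2$ is a sum of $m$ i.i.d. mean-one sub-exponential variables (equivalently a scaled $\chi^2_{2m}$). Using the moment generating function $\E e^{\theta\abs{\va_j^*\vu}^2}=(1-\theta)^{-1}$ for $\theta<1$ and optimizing the Chernoff bound on each tail yields
\[
\PP\!\left(\abs{\tfrac{1}{m}\sum_{j=1}^{m}\abs{\va_j^*\vu}^2-1}>s\right)\leq 2\exp(-m\,\psi(s)),
\]
where $\psi$ is the Legendre rate function of the exponential distribution (explicitly $s-\log(1+s)$ and $-s-\log(1-s)$ on the two sides). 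The specific parametrization $\delta/4=t^2+t$ in the statement is exactly what is needed to lower-bound $\psi(s)$ so that, after the discretization step, the exponent collapses to the clean quantity $mt^2/2$.

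Next I would discretize. Identifying $\C^n$ with $\R^{2n}$, I would fix an $\ep$-net $\NN_\ep$ of the unit sphere with $\abs{\NN_\ep}\leq (1+2/\ep)^{2n}$, apply the pointwise bound together with a union bound over $\NN_\ep$, and then transfer the estimate from the net to the whole sphere. The transfer uses that $\vu\mapsto\frac{1}{m}\norm{A\vu}^2$ is the quadratic form of $M=\frac{1}{m}A^*A$: writing $\vu^*M\vu-\vu'^*M\vu'=(\vu-\vu')^*M\vu+\vu'^*M(\vu-\vu')$ shows the modulus of continuity on the sphere is controlled by $\norm{M}$, and a standard net-to-sphere inequality upgrades the bound on $\NN_\ep$ to a uniform bound at the cost of a factor $(1-2\ep)^{-1}$ that can be absorbed by taking $s$ slightly below $\delta$.

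The main obstacle, and the only genuinely delicate point, is the bookkeeping of constants so that the probability comes out to exactly $1-\exp(-mt^2/2)$ under the stated sample size $m\geq 20\delta^{-2}n$. One must choose the net resolution $\ep$ and the pointwise deviation $s$ so that the union-bound entropy $2n\log(1+2/\ep)$ is dominated by the per-point exponent $m\,\psi(s)$; the hypothesis $m\geq 20\delta^{-2}n$ is precisely the condition that makes this balance succeed with the prescribed $t$. A minor additional subtlety is that, because the summands are only sub-exponential rather than sub-Gaussian, the net-to-sphere step first requires controlling the operator norm $\norm{M}$ itself, so the argument is mildly bootstrapped (bound $\norm{M}$ on the net, then close the estimate on the full sphere) rather than a single one-shot union bound.
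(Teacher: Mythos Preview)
The paper does not prove this lemma; it is quoted verbatim as Lemma~3.1 of \cite{candes2013phaselift} and used as a black box. Your proposal---a pointwise Chernoff bound for the sum of i.i.d.\ exponential random variables followed by an $\ep$-net argument over the unit sphere of $\C^n\cong\R^{2n}$---is exactly the argument given in that reference, so there is nothing to compare against here and no gap to flag.
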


\begin{lemma}[\cite{gao2020perturbed} Lemma A.3]\label{concentration}
	Let $ \va_1, \va_2,\ldots,\va_m \in \C^n$  be i.i.d. Gaussian random measurements. Let $ \vx\in\C^n $ and $ \tilde{\vh}\in\C^n $ be two fixed vectors with $ \|\vx\|=\|\tilde{\vh}\|=1 $, $ \Im(\tilde{\vh}^*\vx)=0 $ and $ \tilde{\vh}\neq\pm \vx $. For any $ \delta>0 $, there exist positive constants $C_\delta, c_\delta>0$ such that for any $m \geq C_\delta n$ the inequalities
	\begin{align}\label{con1}
		\bigg|\frac{1}{m}\sum_{j=1}^{m}	 \Re(\tilde{\vh}^*\va_j\va_j^*\vx)\cdot I_{\{|\va_j^*\vx|>|\va_j^*\tilde{\vh}|\}}-\frac{1}{2}\Re(\tilde{\vh}^*\vx)\bigg|	 \leq \delta,
	\end{align}
	\begin{align}\label{con2}
		\frac{1}{2}-\delta\leq \frac{1}{m}\sum_{j=1}^{m}|\va_j^*\vx|^2\cdot I_{\{|\va_j^*\vx|>|\va_j^*\tilde{\vh}|\}}\leq \frac{3}{4}+\delta,
	\end{align}
	\begin{align}\label{con2'}
		\frac{1}{4}-\delta\leq \frac{1}{m}\sum_{j=1}^{m}|\va_j^*\vx|^2\cdot I_{\{|\va_j^*\vx|\leq|\va_j^*\tilde{\vh}|\}}\leq \frac{1}{2}+\delta,
	\end{align}
	\begin{equation}
		\begin{aligned}\label{con3}
			\frac{1}{8}+\frac{7}{32}\Re^2(\tilde{\vh}^*\vx)-\delta\leq\frac{1}{m}\sum_{j=1}^{m}	 \frac{\big(\Re(\tilde{\vh}^*\va_j\va_j^*\vx)\big)^2}{|\va_j^*\vx|^2}\cdot I_{\{|\va_j^*\vx|>|\va_j^*\tilde{\vh}|\}}\leq\frac{1}{4}+\frac{1}{4}\Re^2(\tilde{\vh}^*\vx)+\delta
		\end{aligned}
	\end{equation}
	and
	\begin{equation}
		\begin{aligned}\label{con4}
			\frac{1}{4}+\frac{1}{4}\Re^2(\tilde{\vh}^*\vx)-\delta\leq\frac{1}{m}\sum_{j=1}^{m}	 \frac{\big(\Re(\tilde{\vh}^*\va_j\va_j^*\vx)\big)^2}{|\va_j^*\vx|^2})\cdot I_{\{|\va_j^*\vx|\leq|\va_j^*\tilde{\vh}|\}}\leq\frac{3}{8}+\frac{9}{32}\Re^2(\tilde{\vh}^*\vx)+\delta
		\end{aligned}
	\end{equation}
	hold with probability at least $ 1-\exp(-c_\delta m) $.
\end{lemma}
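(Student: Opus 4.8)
The plan is to exploit the fact that, with $\vx$ and $\tilde{\vh}$ \emph{fixed}, each of the five empirical averages is a mean of i.i.d.\ scalar random variables, so it suffices to compute every expectation exactly (or bound it tightly) and then invoke a Bernstein-type concentration bound; no covering argument is needed at this level. First I would reduce the ambient dimension: every summand depends only on the pair $(u_j,v_j):=(\va_j^*\vx,\,\va_j^*\tilde{\vh})$, and since $\E[\va_j\va_j^*]=I_n$ these pairs are i.i.d.\ mean-zero complex Gaussians with $\E|u_j|^2=\E|v_j|^2=1$ and $\E[u_j\overline{v_j}]=\tilde{\vh}^*\vx=:\rho$, which is real because $\Im(\tilde{\vh}^*\vx)=0$ and satisfies $|\rho|<1$ since $\tilde{\vh}\neq\pm\vx$. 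As $\Re(\tilde{\vh}^*\va_j\va_j^*\vx)=\Re(u_j\overline{v_j})$ and the indicator depends only on $|u_j|,|v_j|$, the whole problem collapses to a two-dimensional complex-Gaussian model parametrized by the single real number $\rho$.

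Next I would evaluate the means. For \eqref{con1} there is a clean symmetry argument: the pair $(u_j,v_j)$ is exchangeable, and under $(u_j,v_j)\mapsto(v_j,u_j)$ the quantity $\Re(u_j\overline{v_j})$ is invariant while the event $\{|u_j|>|v_j|\}$ maps to its complement; since the two contributions sum to $\E[\Re(u_j\overline{v_j})]=\rho$, each equals $\tfrac12\rho=\tfrac12\Re(\tilde{\vh}^*\vx)$ exactly. The same exchangeability links \eqref{con2} and \eqref{con2'} through the identity $\E[|u_j|^2 I_{\{|u_j|>|v_j|\}}]+\E[|u_j|^2 I_{\{|u_j|\le|v_j|\}}]=1$; here I would integrate the bivariate correlated exponential law of $(|u_j|^2,|v_j|^2)$ to obtain each expectation as an explicit function of $\rho$ and then bound it between the stated constants uniformly over $|\rho|<1$. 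For instance, the independent case $\rho=0$ gives $3/4$ and $1/4$, while the value approaches $1/2$ as $|\rho|\to1$. The quadratic estimates \eqref{con3}--\eqref{con4} are handled the same way but require a magnitude--phase integration of $(\Re(u_j\overline{v_j}))^2/|u_j|^2$ against the correlated density to reproduce the exact rational coefficients $\tfrac18+\tfrac7{32}\rho^2$, etc.

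Finally I would supply the concentration. Each summand is sub-exponential with an $O(1)$ sub-exponential norm: $|u_j|^2 I$ and $\Re(u_j\overline{v_j})I$ are squares or products of Gaussian marginals, while $(\Re(u_j\overline{v_j}))^2/|u_j|^2\le|v_j|^2$ by Cauchy--Schwarz, so the ratio is dominated by a sub-exponential variable and in particular exhibits no singularity as $u_j\to0$ or $\rho\to1$. Bernstein's inequality for i.i.d.\ sub-exponential sums then forces each average to lie within $\delta$ of its mean with probability at least $1-\exp(-c_\delta m)$ once $m\ge C_\delta n$ (the purely scalar structure means large $m$ alone would suffice), and a union bound over the five estimates preserves this rate. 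The main obstacle is the second step: the exact evaluation of the truncated second-moment integrals in \eqref{con3}--\eqref{con4}, where the correlation $\rho$ couples the radial and angular parts of the Gaussian integral and the precise coefficients $7/32$ and $9/32$ must be extracted; once these means are pinned down, the concentration step is routine and rests on the same sub-Gaussian measurement machinery behind Lemma \ref{sub_gaussian_concentration}.
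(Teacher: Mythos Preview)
The paper does not prove this lemma; it is quoted verbatim from \cite{gao2020perturbed} (their Lemma~A.3) and used as a black box in the proof of Lemma~\ref{curvature condition}. So there is no ``paper's own proof'' to compare against, and your proposal should be read as a self-contained argument.

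Your outline is sound. The reduction to the pair $(u_j,v_j)=(\va_j^*\vx,\va_j^*\tilde{\vh})$ is correct, and the exchangeability of $(u_j,v_j)$ (which relies on $\rho=\tilde{\vh}^*\vx$ being real) does give \eqref{con1} for free and pairs \eqref{con2} with \eqref{con2'} and \eqref{con3} with \eqref{con4}. The domination $(\Re(u_j\overline{v_j}))^2/|u_j|^2\le|v_j|^2$ is exactly what is needed to place all summands in a common sub-exponential class, so Bernstein applies uniformly in $\rho$ and the probability bound $1-\exp(-c_\delta m)$ follows; your parenthetical remark that the dependence on $n$ is vacuous here is also correct, since for fixed $\vx,\tilde{\vh}$ the summands are scalar i.i.d.\ and only $m$ matters.

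The one place where real work remains is precisely where you flag it: verifying that the truncated expectations in \eqref{con2}--\eqref{con4} land inside the stated $\rho$-dependent intervals for every $|\rho|<1$. This is a two-dimensional correlated-Gaussian integral (after writing $v_j=\rho u_j+\sqrt{1-\rho^2}\,w_j$ with $w_j$ independent of $u_j$), and the endpoints $\tfrac18+\tfrac{7}{32}\rho^2$, $\tfrac14+\tfrac14\rho^2$, $\tfrac38+\tfrac{9}{32}\rho^2$ have to come out of that calculation. You have identified this honestly as the crux; once it is done, the rest is routine.
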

The following lemma provides an upper bound for the operator norm of $  \nabla^2 f(\vz)$.

\begin{lemma}\label{hessian}
	Suppose $ \vh_1 $ and $ \vh_2 $ are two vectors with $ \|\vh_1\|=\|\vh_2\|=s$.
	There exist constants $C',c',c_1>0$ such that when $ m\geq C'n $,
	$|\Re\big(\langle\nabla f(\vx+\vh_1) - \nabla f(\vx+\vh_2),\vh_2\rangle\big)|\leq c_1s\|\vh_1-\vh_2\|  $ holds with probability at least $ 1-\exp(-c'm) $.
\end{lemma}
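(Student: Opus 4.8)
The plan is to treat this as a Lipschitz (Hessian-type) estimate for the gradient map near $\vx$, reducing the bilinear quantity to a sum of products of linear forms $|\va_j^*(\vh_1-\vh_2)|$ and $|\va_j^*\vh_2|$ that the concentration Lemma \ref{sub_gaussian_concentration} controls. Writing $\vz_i=\vx+\vh_i$ and using the noiseless identity $y_j=|\va_j^*\vx|^2+b_j$, the gradient is $\nabla f(\vz)=\frac1m\sum_j G_j(\vz)\va_j$ with scalar coefficient $G_j(\vz)=\big(1-\frac{y_j}{|\va_j^*\vz|^2+b_j}\big)\va_j^*\vz$. Since $\langle G_j\va_j,\vh_2\rangle=G_j(\vh_2^*\va_j)$, the target equals $\frac1m\Re\sum_j\big(G_j(\vz_1)-G_j(\vz_2)\big)(\vh_2^*\va_j)$, so it suffices to prove the pointwise bound $|G_j(\vz_1)-G_j(\vz_2)|\le C'|\va_j^*(\vh_1-\vh_2)|$ for a constant $C'=C'(\alpha_1)$, and then conclude by Cauchy--Schwarz.

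For the pointwise bound I would split $G_j(\vz)=\va_j^*\vz-(|a_j|^2+b_j)\,F_j(\va_j^*\vz)$, where $a_j:=\va_j^*\vx$ and $F_j(u):=\frac{u}{|u|^2+b_j}$. The linear part contributes exactly $\va_j^*(\vh_1-\vh_2)$. For the nonlinear part I would bound the Lipschitz constant of $F_j$ along the segment $u(t)=\va_j^*\vz_2+t\,\va_j^*(\vh_1-\vh_2)$ using Wirtinger derivatives: a direct computation gives $\partial_uF_j=\frac{b_j}{(|u|^2+b_j)^2}$ and $\partial_{\bar u}F_j=-\frac{u^2}{(|u|^2+b_j)^2}$, hence $|\partial_uF_j|+|\partial_{\bar u}F_j|=\frac{|u|^2+b_j}{(|u|^2+b_j)^2}=\frac{1}{|u|^2+b_j}\le\frac{1}{b_j}$ uniformly in $u$. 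Integrating along the segment yields $|F_j(\vz_1)-F_j(\vz_2)|\le \frac1{b_j}|\va_j^*(\vh_1-\vh_2)|$, and since $b_j\ge\alpha_1|a_j|^2$ gives $\frac{|a_j|^2+b_j}{b_j}\le 1+\frac1{\alpha_1}$, the two parts combine to $|G_j(\vz_1)-G_j(\vz_2)|\le\big(2+\frac1{\alpha_1}\big)|\va_j^*(\vh_1-\vh_2)|$, so one may take $C'=2+\tfrac1{\alpha_1}$.

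Finally, combining the pointwise estimate with Cauchy--Schwarz gives $|\Re\langle\nabla f(\vz_1)-\nabla f(\vz_2),\vh_2\rangle|\le C'\big(\frac1m\sum_j|\va_j^*(\vh_1-\vh_2)|^2\big)^{1/2}\big(\frac1m\sum_j|\va_j^*\vh_2|^2\big)^{1/2}$. Because $\vh_1,\vh_2$ are fixed, I would apply Lemma \ref{sub_gaussian_concentration} to the two fixed unit directions $(\vh_1-\vh_2)/\|\vh_1-\vh_2\|$ and $\vh_2/\|\vh_2\|$: for $\delta<1$ and $m\ge C'n$, both normalized sums are at most $1+\delta$ with probability at least $1-\exp(-c'm)$ after a union bound. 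This produces $|\Re\langle\nabla f(\vz_1)-\nabla f(\vz_2),\vh_2\rangle|\le C'(1+\delta)\|\vh_1-\vh_2\|\,\|\vh_2\|=c_1 s\|\vh_1-\vh_2\|$, as claimed. The only genuinely delicate point is the uniform Lipschitz bound on the ratio $F_j$; the clean identity $|\partial_uF_j|+|\partial_{\bar u}F_j|=1/(|u|^2+b_j)$ makes the background term $b_j$ act as a built-in regularizer, so no lower bound on $|\va_j^*\vz|$ (and hence no truncation) is needed.
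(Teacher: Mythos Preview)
Your argument is correct and takes a genuinely different route from the paper. The paper interpolates along the segment $\vz_t=\vx+\vh_2+t(\vh_1-\vh_2)$, computes the $t$-derivative of $G(t)=\Re\langle\nabla f(\vz_t),\vh_2\rangle$, and splits $G'(t)$ into three pieces $G_1,G_2,G_3$; each piece is then bounded by pulling out the maximum over $j$ of the rational factors (bounded in terms of $\alpha_1,\alpha_2$) and applying Lemma~\ref{sub_gaussian_concentration} to the remaining bilinear sums. You instead work termwise on the scalar coefficient $G_j(\vz)=\va_j^*\vz-(|a_j|^2+b_j)F_j(\va_j^*\vz)$ and obtain a clean uniform Lipschitz bound for $F_j(u)=u/(|u|^2+b_j)$ via the identity $|\partial_uF_j|+|\partial_{\bar u}F_j|=1/(|u|^2+b_j)\le 1/b_j$; then a single Cauchy--Schwarz plus two applications of Lemma~\ref{sub_gaussian_concentration} finish the job. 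Your approach is shorter, avoids differentiating the full expression, and yields an explicit constant $c_1=(2+1/\alpha_1)(1+\delta)$ depending only on $\alpha_1$, whereas the paper's constant involves both $\alpha_1$ and $\alpha_2$. One small remark: you justify the final step by saying ``$\vh_1,\vh_2$ are fixed,'' but in fact Lemma~\ref{sub_gaussian_concentration} is stated uniformly over all unit vectors, so your bound actually holds simultaneously for every pair $\vh_1,\vh_2$ on a single high-probability event---which is exactly what the covering argument in Lemma~\ref{curvature condition} needs.
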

\begin{proof}
	Recall that
	\begin{align*}
		\nabla f(\vz) &:= \left(\frac{\partial f(\vz,\overline{\vz})}{\partial \vz}\Big|_{\overline{\vz} = \text{constant}}\right)^* = \frac{1}{m}\sum_{j=1}^{m}\left(1-\frac{y_j}{|\va_j^*\vz|^2 +b_j}\right)\va_j\va_j^*\vz.
	\end{align*}
	Then consider 
	\begin{align*}
		&|\Re\big(\langle\nabla f(\vx+\vh_1) - \nabla f(\vx+\vh_2),\vh_2\rangle\big)|\\
		&=\left|\frac{1}{m}\sum_{j=1}^{m}\left(\Big(1-\frac{y_j}{|\va_j^*(\vx+\vh_1)|^2 +b_j}\Big)\Re\big(\vh_2^*\va_j\va_j^*(\vx+\vh_1)\big)-\Big(1-\frac{y_j}{|\va_j^*(\vx+\vh_2)|^2 +b_j}\Big)\Re\big(\vh_2^*\va_j\va_j^*(\vx+\vh_2)\big)\right)\right|
	\end{align*}
	Here we define a function $ G(t) $ as
	\[
	G(t)=\frac{1}{m}\sum_{j=1}^{m}\Big(1-\frac{y_j}{|\va_j^*(\vx+\vh_2+t(\vh_1-\vh_2))|^2 +b_j}\Big)\Re\big(\vh_2^*\va_j\va_j^*(\vx+\vh_2+t(\vh_1-\vh_2))\big)
	\]
	Then the problem transformed to estimate $  |G(1)- G(0)| $. By setting $ \vz_t=\vx+\vh_2+t(\vh_1-\vh_2) $ and simple calculations, we have 
	\begin{align*}
		G'(t)&=\frac{1}{m}\sum_{j=1}^{m}\left(\Big(1-\frac{y_j}{|\va_j^*\vz_t|^2 +b_j}\Big)\Re\big(\vh_2^*\va_j\va_j^*(\vh_1-\vh_2)\big)+\frac{2y_j\Re(\vh_2^*\va_j\va_j^*\vz_t)\cdot\Re\big((\vh_1-\vh_2)^*\va_j\va_j^*\vz_t\big)}{(|\va_j^*\vz_t|^2 +b_j)^2}\right)\\
		&=\frac{1}{m}\sum_{j=1}^{m}\Bigg(\Big(1-\frac{y_j}{|\va_j^*\vz_t|^2 +b_j}\Big)\Re\big(\vh_2^*\va_j\va_j^*(\vh_1-\vh_2)\big)+2y_j\frac{|\va_j^*\vz_t|^2-\Re(\vx^*\va_j\va_j^*\vz_t)}{(|\va_j^*\vz_t|^2 +b_j)^2}\Re\big((\vh_1-\vh_2)^*\va_j\va_j^*\vz_t\big)\\
		&\quad\quad -\frac{2y_j}{(|\va_j^*\vz_t|^2 +b_j)^2}\Re^2\big((\vh_1-\vh_2)^*\va_j\va_j^*\vz_t\big)\Bigg)\\
		&=G_1+G_2+G_3
	\end{align*}
	with $ G_i,\,i=1,2,3 $ defined to simplify the expression. Note that 
	$ \|\vz_t\| \leq (1+s)$.
	Then according to Lemma \ref{sub_gaussian_concentration}, when $ m\geq C'n $, with probability at least $ 1-\exp(-c'n) $, we have
	\begin{align*}
		|G_1|&\leq\max_j\Big|1-\frac{y_j}{|\va_j^*\vz_t|^2 +b_j}\Big|\cdot\Big|\frac{1}{m}\sum_{j=1}^{m}\Re\big(\vh_2^*\va_j\va_j^*(\vh_1-\vh_2)\big)\Big|\\
		&\leq 2\frac{1+\alpha_1+\alpha_2}{\alpha_1}s\|\vh_1-\vh_2\|,
	\end{align*}
	\begin{align*}
		|G_2|&\leq\max_j2y_j\frac{|\va_j^*\vz_t|^2+|\vx^*\va_j\va_j^*\vz_t|}{(|\va_j^*\vz_t|^2 +b_j)^2}\cdot\Big|\frac{1}{m}\sum_{j=1}^{m}\Re\big((\vh_1-\vh_2)^*\va_j\va_j^*\vz_t\big)\Big|\\
		&\leq 4\frac{1+\alpha_2}{\alpha_1}(1+\frac{1}{2\sqrt{\alpha_1}})(1+s)\|\vh_1-\vh_2\|
	\end{align*}
	and 
	\begin{align*}
		|G_3|&\leq\frac{1}{m}\sum_{j=1}^{m}\frac{2y_j|\va_j^*\vz_t|^2}{(|\va_j^*\vz_t|^2 +b_j)^2}|\va_j^*(\vh_1-\vh_2)|^2\\
		&\leq 8\frac{1+\alpha_2}{\alpha_1}s\|\vh_1-\vh_2\|
	\end{align*}
	Then straightforwardly we have the following estimate
	\begin{align*}
		|G'(t)|\leq c_{1}s\|\vh_1-\vh_2\|
	\end{align*}
	where $ c_1 $ is a constant that depends on $ \alpha_1,\alpha_2 $.
\end{proof}
To derive the local curvature condition established in Lemma \ref{curvature condition}, we must ensure certain relationships hold between parameters $\alpha_1,\alpha_2$ and $\rho$. The following lemma provides an analysis of the constraints on these parameters.
\begin{lemma}\label{con_a_rho}
	
	Let $\alpha_1>0$, $\alpha_2>0$ and $\rho>0$ be constants associated with the local curvature conditions in Lemma \ref{curvature condition}. To satisfy the curvature condition, the parameters must fulfill the following relationships:
	$$
	\begin{cases}
		\alpha_1<\alpha_2<3\alpha_1-(\rho+1)^2, & \rho\in(0,\rho_1]\\
		\alpha_1<\alpha_2<(3-\frac{(3-t_2)(\rho-\rho_1)}{1/6-\rho_1})\alpha_1-(\rho+1)^2, & \rho\in(\rho_1,\rho_2],
	\end{cases} 	
	$$
	where $ \rho_1=\frac{2\sqrt{76015}-276}{2477}\approx 0.11119,\,
	t_2=\frac{8075-\sqrt{45678865}}{990}\approx 1.32968,\,
	\rho_2=\frac{2\sqrt{39}-12}{3}\approx 0.16333. $
	Under these conditions, the following quantities remain positive:
	\[
	\Phi_1>0,\,\Phi_2>0,\,\psi>0,\,\Phi_1+\Phi_2-\varphi>0,
	\]
	where the terms are defined as
	\begin{align*}
		\Phi_1&=\frac{1-6\rho}{4U}-\frac{\rho^2}{16L_1},\quad
		\Phi_2=\frac{9+16\rho^2}{32U}-\frac{3}{32L_2},\\
		\psi&=\frac{63}{128U}-\frac{9}{128L_2},\quad
		\varphi=\frac{81U\rho^2}{U^2(7+16U\psi)},
	\end{align*}
	with $ 	U=(1+\rho)^2+\alpha_2,\,L_1=(1-\rho)^2+\alpha_1,\,L_2=\alpha_1$.
\end{lemma}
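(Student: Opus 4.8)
The plan is to convert each of the four positivity claims into an explicit algebraic inequality among $\alpha_1,\alpha_2,\rho$ by clearing the strictly positive denominators $U,L_1,L_2$, and then to check that the stated bounds on $\alpha_2$ force each inequality. Throughout I use the standing hypothesis $\alpha_1<\alpha_2<3\alpha_1-(1+\rho)^2$ (tightened in the second regime), which is exactly $U<3L_2$ since $L_2=\alpha_1$ and $U=(1+\rho)^2+\alpha_2$; I also note that $\rho\le\rho_2<1/6$, so $1-6\rho>0$, which is needed to keep the relevant numerators positive.

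Three of the conditions are immediate. Since $\psi>0\iff 63L_2>9U\iff U<7L_2$ and $\Phi_2>0\iff(9+16\rho^2)L_2>3U$, both follow from $U<3L_2$, because $3<7$ and $3<3+\tfrac{16}{3}\rho^2$. For $\Phi_1$, clearing denominators gives $\Phi_1>0\iff 4(1-6\rho)L_1>\rho^2U$; using $L_1=(1-\rho)^2+\alpha_1>\alpha_1=L_2$ together with $U<3L_2$, it suffices that $4(1-6\rho)\ge 3\rho^2$, i.e.\ $3\rho^2+24\rho-4\le0$. The positive root of this quadratic is $\rho_2=\frac{2\sqrt{39}-12}{3}$, which is precisely why the admissible range is cut off at $\rho\le\rho_2$.

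The remaining inequality $\Phi_1+\Phi_2-\varphi>0$ is the crux. First I would simplify $\varphi$: a direct computation gives $16U\psi=\tfrac{63}{8}-\tfrac{9U}{8L_2}$, whence $7+16U\psi=\tfrac{119L_2-9U}{8L_2}>0$ (positive thanks to $U<3L_2$), so that $\varphi=\tfrac{648\rho^2L_2}{U(119L_2-9U)}$. Relaxing $\Phi_1\ge\tfrac{1-6\rho}{4U}-\tfrac{\rho^2}{16L_2}$ via $L_1\ge L_2$ makes the entire expression homogeneous of degree $-1$ in the pair $(U,L_2)$, hence a function of the single ratio $u:=U/L_2\in(1,3]$ and $\rho$ alone:
\[
g(u,\rho):=\frac{1-6\rho}{4u}+\frac{9+16\rho^2}{32u}-\frac{3}{32}-\frac{\rho^2}{16}-\frac{648\rho^2}{u(119-9u)}.
\]
One then checks $\frac{\partial g}{\partial u}<0$ on the relevant region, so the worst case is the largest admissible $u$. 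For $\rho\le\rho_1$ the bound $U<3L_2$ allows $u$ up to $3$, and $g(3,\rho)=\tfrac{1}{12}-\tfrac{\rho}{2}-\tfrac{2477}{1104}\rho^2$ is positive iff $2477\rho^2+552\rho-92<0$, whose positive root is $\rho_1=\frac{2\sqrt{76015}-276}{2477}$. For $\rho\in(\rho_1,\rho_2]$ one must instead keep $u$ below the threshold curve $u^{\ast}(\rho)$ solving $g(u^{\ast},\rho)=0$, which I underapproximate by the secant through its endpoints $(\rho_1,3)$ and $(1/6,t_2)$; at $\rho=1/6$ the term $\tfrac{1-6\rho}{4u}$ vanishes and $g(u,1/6)=0$ reduces to $495u^2-8075u+9862=0$ with smaller root $t_2=\frac{8075-\sqrt{45678865}}{990}$. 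Enforcing $\alpha_2<\bigl(3-\frac{(3-t_2)(\rho-\rho_1)}{1/6-\rho_1}\bigr)\alpha_1-(1+\rho)^2$, i.e.\ $u$ below this secant, then gives $g>0$ by monotonicity.

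The main obstacle is exactly this last step. Beyond the bookkeeping of simplifying $\varphi$, I must (i) justify the relaxation $L_1\ge L_2$ and the reduction to the single ratio $u$, (ii) verify $\frac{\partial g}{\partial u}<0$ uniformly on $u\in(1,3]$, $\rho\in(0,\rho_2]$, and (iii) confirm that the linear secant is a genuine lower bound for the nonlinear threshold $u^{\ast}(\rho)$ on $(\rho_1,1/6)$. The safest way to settle (iii) is to substitute the secant value into $g$ and check that the resulting one-variable polynomial inequality stays nonnegative on $(\rho_1,\rho_2]$ (equivalently, that $u^{\ast}$ is concave there, so the secant lies below its graph); this is the only place where a genuine, if elementary, estimate rather than a one-line comparison is required.
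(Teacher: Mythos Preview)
Your plan is correct and lands on exactly the same constants $\rho_1$, $t_2$, $\rho_2$ as the paper, but the execution differs in one useful respect. The paper does \emph{not} relax $L_1\ge L_2$; instead it clears all denominators in $\Phi_1+\Phi_2-\varphi$ to obtain a quadratic in $u=U/L_2$ whose coefficients still carry both $L_1$ and $L_2$, and establishes monotonicity by locating the vertex of that quadratic (showing it lies above $3$). In regime $\rho\le\rho_1$ the paper then substitutes $u=3$ and splits the result into a $(1-\rho)^2$--part and an $\alpha_1$--part, checking both are nonnegative; the $\alpha_1$--coefficient $736-4416\rho-19816\rho^2=8(92-552\rho-2477\rho^2)$ is the binding one and gives precisely your $\rho_1$. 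Your relaxation $L_1\ge L_2$ simply throws away the (positive) $(1-\rho)^2$--contribution in advance, which collapses everything to a clean function $g(u,\rho)$ of two variables and lets you read off $\rho_1$ and $t_2$ directly from $g(3,\rho)=0$ and $g(u,1/6)=0$. This is a genuine simplification: you trade a tiny bit of sharpness you never needed for a much tidier reduction, and in regime $(\rho_1,\rho_2]$ your substitution of the secant yields an honest one--variable inequality, whereas the paper's substitution into the unrelaxed quadratic still contains $\alpha_1$ through $L_1$ and the claimed ``fourth--order polynomial in $\rho$'' is not fully explained. Your remaining obligations (ii) and (iii) are routine; for (ii) the paper's vertex argument is an alternative if the derivative bound proves fiddly.
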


\begin{proof}
	In this proof, we validate the conditions on $\alpha_1$, $\alpha_2$ and $\rho$ to ensure the positivity of critical terms, which support the local curvature condition required in Lemma \ref{curvature condition}. Each inequality corresponds to verifying that specific terms remain positive over a certain range of $\rho$.		
	\begin{itemize}
		\item Positivity of $\psi$.
		
		Since $\psi>0$ is equivalent to $\alpha_2<7\alpha_1-(1+\rho)^2$, and because we know $\alpha_2<3\alpha_1-(1+\rho)^2<7\alpha_1-(1+\rho)^2$ for all $\rho\in (0,\rho_2]$, this condition is satisfied, hence $\psi>0$ holds.\\
		
		\item Positivity of $\Phi_2$.
		
		We see that $\Phi_2>0$ is equivalent to $\alpha_2<(3+\frac{16}{3}\rho^2)\alpha_1-(1+\rho)^2$. Given that 
		$\alpha_2<3\alpha_1-(1+\rho)^2<(3+\frac{16}{3}\rho^2)\alpha_1-(1+\rho)^2$ for all $\rho\in (0,\rho_2]$,  $\Phi_2>0$ holds as well.\\
		
		\item Positivity of $\Phi_1$.
		
		For $\Phi_1>0$, it is required that $U<\frac{4(1-6\rho)L_1}{\rho^2}$. Given $\alpha_2<3\alpha_1-(1+\rho)^2$, it suffices to show that $3\alpha_1<\frac{4(1-6\rho)L_1}{\rho^2}$, which can be rewritten as $(3\rho^2+24\rho-4)\alpha_1<4(1-6\rho)(1-\rho)^2$. This inequality holds for
		$\rho\in(0,\rho_2]$ as  $3\rho^2+24\rho-4\leq 0$ in this interval. Thus 
		$\Phi_1>0$ is satisfied.\\
		
		\item Positivity of $\Phi_1+\Phi_2-\varphi$.
		
		We reformulate the condition $\Phi_1+\Phi_2-\varphi>0$ to 
		\begin{equation}\label{eq:r1}
			9(2\rho^2L_2+3L_1)\big(\frac{U}{L_2}\big)^2-[(510-432\rho+144\rho^2)L_1+238\rho^2L_2]\frac{U}{L_2}+(2023-5712\rho-18832\rho^2)L_1>0.
		\end{equation}	
		Since $\alpha_2<3\alpha_1-(\rho+1)^2$ implies $\frac{U}{L_2}<3$, we consider the minimum value of this quadratic equation in $\frac{U}{L_2}$, which occurs at 
		$$\frac{(510-432\rho+144\rho^2)L_1+238\rho^2L_2}{18(2\rho^2L_2+3L_1)}.$$
		Since $$\frac{(510-432\rho+144\rho^2)L_1+238\rho^2L_2}{18(2\rho^2L_2+3L_1)}-3=\frac{(348-432\rho+144\rho^2)L_1+130\rho^2L_2}{18(2\rho^2L_2+3L_1)}>0,$$
		We have that the minimal value of \eqref{eq:r1} is larger than the value at $\frac{U}{L_2}=3$ when $\rho\in(0,\rho_1]$ and the value at $\frac{U}{L_2}=3-\frac{(3-t_2)(\rho-\rho_1)}{1/6-\rho_1}$ when $\rho\in(\rho_1,\rho_2]$.
		
		For $\rho\in(0,\rho_1]$, substituting $\frac{U}{L_2}=3$ simplifies the expression to 
		$$(736-4416\rho-19264\rho^2)(1-\rho)^2+(736-4416\rho-19816\rho^2)\alpha_1.$$
		Since $736-4416\rho-19816\rho^2\geq 0$ for all $\rho\in(0,\rho_1]$, the expression is positive.
		
		For $\rho\in(\rho_1,\rho_2]$, substituting $\frac{U}{L_2}=3-\frac{(3-t_2)(\rho-\rho_1)}{1/6-\rho_1}$, we verify that the resulting fourth-order polynomial in $\rho$ has roots $\rho_1$, $1/6$, approximately $-0.9853$ and $0.42427$.
		$\rho_1$ and $1/6$ are the roots in the middle, thus \eqref{eq:r1} holds for all $\rho\in(\rho_1,1/6)$. Therefore, we have $\Phi_1+\Phi_2-\varphi>0$ for all $\rho\in(0,\rho_2]$.
	\end{itemize}				
\end{proof}
{
	\bibliographystyle{plain}
	\bibliography{poisson_noise_model}
}


\end{document}